\newcommand{\lina}[1]{  \ifthenelse{\boolean{showcomments}}
{ \textcolor{green}{(  #1)}} {}  }
\newcommand{\guannan}[1]{\ifthenelse{\boolean{showcomments}}
{ \textcolor{green}{( #1)} } {} }
\newcommand{\guannanrevise}[1]{\ifthenelse{\boolean{showcomments}}
	{ \textcolor{blue}{#1} } {#1} }
\def\ba{\begin{array}}
\def\ea{\end{array}}
\newcommand{\R}{\mathbb{R}}
\newcommand{\btab}{\begin{tabular}}
\newcommand{\etab}{\end{tabular}}
\newtheorem{theorem}{Theorem}
\newtheorem{proposition}[theorem]{Proposition}
\newtheorem{lemma}[theorem]{Lemma}
\newtheorem{definition}{Definition}
\begin{document}

\title{Exploiting Fast Decaying and Locality in Multi-Agent MDP with Tree Dependence Structure}
\author{Guannan Qu, Na Li 
	\thanks{Guannan Qu and Na Li are affiliated with John A. Paulson School of Engineering and Applied Sciences at Harvard University. Email: gqu@g.harvard.edu, nali@seas.harvard.edu. The work was supported by NSF 1608509, NSF CAREER 1553407, AFOSR YIP, and ARPA-E through the NODES program. }}  

\maketitle

\thispagestyle{plain}
\pagestyle{plain}

\begin{abstract}
	This paper considers a multi-agent Markov Decision Process (MDP), where there are $n$ agents and each agent $i$ is associated with a state $s_i$ and action $a_i$ taking values from a finite set. Though the global state space size and action space size are exponential in $n$, we impose local dependence structures and focus on local policies that only depend on local states, and we propose a method that finds nearly optimal local policies in polynomial time (in $n$) when the dependence structure is a one directional tree. The algorithm builds on approximated reward functions which are evaluated using locally truncated Markov process. Further, under some special conditions, we prove that the gap between the approximated reward function and the true reward function is decaying exponentially fast as the length of the truncated Markov process gets longer. The intuition behind this is that under some assumptions, the effect of agent interactions decays exponentially in the distance between agents, which we term ``fast decaying property''. Results in this paper are our preliminary steps towards designing efficient reinforcement learning algorithms with optimality guarantees in large multi-agent MDP problems whose state (action) space size is exponentially large in $n$.

\end{abstract}

\section{introduction}
	Multi-agent Markov Decision Processes (MDP)  have found many applications such as robot swarms, game play, queuing networks, and cyber-physcial systems \cite{application_stone2000multiagent,application_gameplay,application_wu2016optimal,marl_bu2008comprehensive}. In a typical multi-agent MDP, there
	are $n$ agents and each agent $i$ has a state $s_i$ and an action $a_i$, both taking values from finite sets. Further, each agent is associated with stage reward $r_i$ that is a function of $s_i$ (and/or $a_i$), and the total stage reward is the summation of $r_i$. The goal is to find decision policies such that the average total reward is maximized.
	
	A fundamental difficulty in solving multi-agent MDP is that even if individual state and action spaces are small, the entire state profile $(s_1,\ldots,s_n)$ and the action profile $(a_1,\ldots,a_n)$ can take values from a set of size exponentially large in $n$. Such curse of dimensionality renders the problem almost intractable to solve. In fact, it is even difficult to just specify the transition probability matrix of the problem.
	Another challenge is that even if an optimal policy can be found to map a global state $(s_1,\ldots,s_n)$ profile to an action profile $(a_1,\ldots,a_n)$, it is usually impractical to implement such a policy for real-world systems because of the limited information and communication among agents. For example, it may be costly for agent $i$ obtain states other than $s_i$. 
	
	The above challenges motivate us to focus on Multi-agent MDPs with \emph{local dependence structures} and \emph{local policies}. Specifically, we associate the agents with a underlying dependence graph $\mathcal{G}$, and assume that the distribution of $s_i(t+1)$ only depend on the current states of the local neighborhood of $i$ as well as the local $a_i(t)$. Further, we restrict to the class of local polices, where agent $i$ takes action $a_i$ based on its own local state $s_i$. Such structures can be found in many real networks, e.g. epidemic spreading network \cite{epi_mei2017dynamics}, opinion dynamics in social networks \cite{application_chakrabarti2008epidemic,application_llas2003nonequilibrium}, communication \cite{application_communication}, networking\cite{complexity_papadimitriou1999complexity}. 
	
	However, challenges remain. Despite the local dependence structures, these structures can not be directly utilized by most dynamic programming algorithms like value iteration, policy iteration \cite{bertsekas2007dpbook} or reinforcement learning algorithms like $Q$-learning, actor-critic methods \cite{sutton1998introduction,rl_konda2000actor}.
	Though there are approximate dynamic programming methods using function approximation to reduce the computational burden caused by the large state/action space \cite{tsitsiklis1997analysis}, it is not immediately clear how to choose function approximators (or basis vectors in linear function approximators) to achieve both computational efficiency and provable (near-)optimalitiy. This brings the following question of this paper: \emph{can the dependence structure of the MDP be utilized to develop efficient algorithms that provably find a (near-)optimal local policy?}
	
	

 \vspace{3pt}   
\noindent\textbf{Our Contributions.} In this paper, we partially answer this question when the dependence graph is a one-directional tree. We propose a Locality-based Local Policy Search (LLPS) method to search for (near-)optimal local policies where individual agent $i$ make the decision $a_i$ based on the local state $s_i$. The method builds on approximately reward functions which are evaluated using locally truncated Markov processes, where for each agent $i$ we consider a ``truncated'' model consisting of agent $i$ and its $k$-hop local parents, where influence beyond the $k$-hop parent is ignored. For small $k$, these approximated reward functions can be computed and optimized efficiently. 
Further, we show that, under some conditions, the gap between the approximated reward function and the true reward function 
\emph{decays exponentially in $k$}, meaning the resulting local policies are near-optimal with small $k$ (whose computation is efficient). This result is based on what we call the ``fast decaying property'', which means the effect of node $i$'s action on node $j$ decays exponentially fast when the graph distance between $i$ and $j$ increases. We will provide both theoretic and empirical studies that show the fast decaying property holds under some assumptions. 
	
	We would like to clarify that while our current framework is model-based and requires full knowledge of the model, the framework in this paper sets ground work towards designing efficient model-free reinforcement learning algorithms with optimality guarantees in large multi-agent MDP problems whose state (action) space size is exponentially large in $n$.


\subsection{Related Work}\label{subsec:related_work}


\noindent\textbf{Partially-Observable MDPs and Stochastic Games.} By restricting $a_i$ to depend on only the local state $s_i$, the problem for each agent can be viewed as a Partially-Observable MDP (POMDP) \cite[Chapter 5]{bertsekas2005dp_vol1} if every other agent's policy is fixed. By allowing every agent to explore their optimal strategy, our problem shares many similarities to stochastic games \cite{stochasticgame_shapley,stochasticgame_jeffeee}. However, we would like to clarify that we only focus on the cooperative setting where agents cooperatively trying to minimize the total rewards. We also only consider the local policies where $a_i(t)$ is only based on \textit{current} $s_i(t)$. The uncooperative setting and history-based local policies are left for future interest. 


\vspace{4pt}
\noindent\textbf{Computational Complexity Results.} The problem we consider is related to a series of work on the computational complexity of stochastic control where the state space is exponentially large but the problem can be succinctly described. For a overview, see \cite[Section 5.2]{complexity_blondel2000survey}. As pointed out in \cite{complexity_blondel2000survey}, apart from rare examples like the multi-armed bandit MDP problem \cite{complexity_gittins2011multi}, such problems are in general intractable, as demonstrated by restless bandit problems \cite{complexity_whittle1988restless} and queuing network control problems \cite{complexity_papadimitriou1999complexity}. Compared to these work, our model is more structured. 
In the context of \cite{complexity_blondel2000survey}, our contribution can be viewed as that we provide another type of succinctly described MDPs and we identify some conditions under which the problem is tractable. 

\vspace{4pt}
\noindent\textbf{Multi-Agent and Distributed Reinforcement Learning.} Our work is also related to the literature on multi-agent and distributed reinforcement learning which has been actively studied \cite{marl_bu2008comprehensive}. For example, \cite{marl_littman1994markov,marl_claus1998dynamics,marl_littman2001value,marl_nashq} employ a game-theoretic framework to study multi-agent reinforcement learning. Paper \cite{marl_lauer2000algorithm} considers a setting where there is a global state which is accessible to every agent and proposes a variant of $Q$-learning that avoids the use of a exponentially large $Q$ table on the joint action; \cite{marl_peshkin2000learning} considers local policy gradient search by modeling each individual agent as a partial observed MDP. 
A recent line of work \cite{zhang2018fully,kar2013cal,macua2015distributed,mathkar2017distributed,wai2018multi} considers the distributed reinforcement learning problem. However, most of them assume there is a global state $s$ which is accessible to all agents; whereas we consider each agent has an individual state $s_i$ and it is hard to access the global state $(s_1,\ldots,s_n)$.


\vspace{4pt}
\noindent\textbf{Factored MDP.} Another related line of work is factored MDP, see e.g. \cite{factor_guestrin2003efficient}. In a factored MDP, states follow similar local dependence structure as ours, and the reward is assumed to be additive. In \cite{factor_guestrin2003efficient}, a class of linear function approximators are proposed that are similar in spirit to our ``truncation'' technique, and policy-iteration and linear programming methods based on the approximator are proposed. However, the theoretic guarantee in \cite{factor_guestrin2003efficient} is based on a certain projection error, which is not guaranteed to be small, whereas our optimality gap is guaranteed to be small, though we make stronger assumptions compared to \cite{factor_guestrin2003efficient}.

\vspace{4pt}
\noindent\textbf{Epidemic Model.} Our network model share similarities to epidemic networks \cite{epi_cator2012second,epi_sahneh2013generalized,epi_mei2017dynamics}; see cf. \cite{epi_mei2017dynamics} for a comprehensive review. In a typical epidemic model, each state $s_i(t)$ means node $i$ is infected or not, and an infected node can cause neighboring nodes to be infected with a certain probability. 
Despite the similarity in dependence structure, our work use very different analysis tools and have different focuses. 

\vspace{4pt}
\noindent\textbf{Glauber dynamics.} Another related line of work is Glauber dynamics \cite{physics_andrey2015dynamic,mezard2009information} in the statistical physics. In a Glauber Dynamics, there is a underlying graph and state $s_i(t+1)$'s distribution depends on the states of $i$'s local neighborhood. Dynamic belief propagation methods (also known as dynamic message passing, dynamic cavity methods) have been proposed to find the marginalized stationary distribution of such models \cite{physics_andrey2014thesis,physics_neri2009cavity,physics_kanoria2011majority,physics_aurell2012dynamic,physics_altarelli2013optimizing}. However, most of these methods rely on certain ansatz (simplifying assumptions) and there is a lack of theoretic guarantee even when the graph is a tree.

\section{Problem Formulation}\label{sec:problem_formulation}

Consider $n$ agents $\mathcal{N}=\{1,\ldots,n\}$ and each agent is associated with state $s_i\in\{0,1\}$ and action $a_i\in\{0,1\}$.\footnote{In this paper we focus on binary state and action for ease of exposition. } The global state is denoted as $s = (s_1,\ldots,s_n)\in \{0,1\}^n$, and the global action is $a=(a_1,\ldots,a_n)\in \{0,1\}^n$.

\textbf{Dependence Graph.} The $n$ agents are associated with an underlying one-directional tree $\mathcal{G} $ rooted at node $1$. Each node $i$ has $1$-hop parent $\delta_i^1$, $k$-hop parent $\delta_i^k$, and set of children $c_i \subset \mathcal{N}$. We will also use $\Delta_i^k$ to denote the path from $i$ to $i$'s $k$-hop parent, $(i,\delta_i^1,\ldots,\delta_i^k)$, and $\tilde\Delta_i^k$ to denote the path from $i$'th $1$-hop parent to $k$-hop parent, $(\delta_i^1,\ldots,\delta_i^k)$. See Fig.~\ref{fig:truncation} for an illustration. When node $i$ does not have a $k$-hop parent, then $\delta_i^k$ means empty set, $\Delta_i^k$ means the path from $i$ to the root node, and $\tilde\Delta_i^k$ means the path from $i$'s $1$-hop parent to the root node.

\textbf{Transition Probabilities. } We assume that conditioned on the current global state $s(t)$ and global action $a(t)$, the next state $\{s_i(t+1)\}_{i=1}^n $ is generated independently, and the distribution of $s_i(t+1)$ (the next state at $i$) only depends on $s_i(t)$ (the current state state at $i$), $a_i(t)$ (the current action at state $i$), and $s_{\delta_i^1}(t)$ (the current state of the $1$-hop parent of $i$) when $i$ is not the root node. Mathematically, this means the transition probability factorizes in the following manner,
{
\begin{align}
    P(s(t+1)|s(t),a(t)) =   \prod_{i=1}^n P_i(s_i(t+1)| s_i(t), s_{\delta_i^1}(t), a_i(t)). \label{eq:P_factorization}
\end{align}}\textbf{Reward.} Each agent $i$ is associated with a reward function $r_i(s_i)$ that depends on agent $i$'s state.\footnote{Without too much difficulty, our work can be generalized to the case $r_i$ can depend on $s_i(t),a_i(t), s_{i}(t+1)$. } We will also interpret $r_i$ as a two-dimensional vector in $\R^2$ and we assume that all rewards $r_i$ are bounded above by $\bar{r}$. The total stage reward $r(s)$ for $s=(s_1,\ldots,s_n)$ is the summation of individual reward functions,
\begin{align}
r(s) = \sum_{i=1}^n r_i(s_i). \label{eq:r_decomposable}
\end{align}

\textbf{Local Policy.} In this paper, we focus on local polices. At node $i$, a local policy $\zeta_i:\{0,1\}\rightarrow\{0,1\}$ is a deterministic decision rule that determines the action $a_i(t)$ based on the current state $s_i(t)$ at node $i$. The set of all possible deterministic local policies at node $i$ is denoted as $\Gamma_i$, and $|\Gamma_i|=4$ as there are $4$ distinct maps from $s_i\in \{0,1\}$ to $a_i\in\{0,1\}$. We collectively write all agents' local policies in the tuple $\zeta = (\zeta_1,\ldots,\zeta_n)\in\Gamma_1\times\cdots\times\Gamma_n\triangleq \Gamma$.

\textbf{Stationary Distribution.} In Proposition~\ref{prop:ergodic}, we show that if the individual transition $P_i$ is ergodic in some sense, then the entire MDP is ergodic for localized policies. Due to space limit, the proof of Proposition~\ref{prop:ergodic} is postponed to our online report \cite[Appendix-B]{online}. Throughout this paper, we assume the conditions in Proposition~\ref{prop:ergodic} hold and the stationary distribution exists and is unique.

\begin{proposition}\label{prop:ergodic}
If for each $i$, $\forall s_{\delta_i^1}\in\{0,1\},\zeta_i\in \Gamma_i$, if the Markov Chain on state $s_i$ determined by the transition matrix
$P_i(s_i(t+1)|s_i(t),s_{\delta_i^1}(t) = s_{\delta_i^1}, a_i(t) = \zeta_i(s_i(t))) $
is ergodic, then for the transition matrix \eqref{eq:P_factorization} and for all local policies $\zeta \in \Gamma$, the induced Markov chain is ergodic and has a unique stationary distribution $\pi$. 
\end{proposition}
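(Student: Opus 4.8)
The plan is to fix an arbitrary local policy $\zeta\in\Gamma$ and show that the induced finite chain on $\{0,1\}^n$ is \emph{irreducible} and \emph{aperiodic}; for a finite chain this is equivalent to ergodicity and yields a unique stationary distribution $\pi$. The key structural fact I would exploit is that the tree is one-directional: by \eqref{eq:P_factorization} the next state of node $i$ depends only on $s_i$, $s_{\delta_i^1}$ and $a_i=\zeta_i(s_i)$, so for any \emph{ancestor-closed} set $S$ (one containing the parent of each of its members) the coordinates $\{s_j\}_{j\in S}$ evolve as an \emph{autonomous} Markov chain. Ordering the nodes as $v_1=1,v_2,\dots,v_n$ so that every parent precedes its children, each prefix $S_k=\{v_1,\dots,v_k\}$ is ancestor-closed, the process $M_k$ on $S_k$ is a Markov chain, and $M_n$ is the full chain. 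I would then induct on $k$, the base case $M_1$ being the root chain $P_1(s_1(t+1)\mid s_1(t),\zeta_1(s_1(t)))$, which is ergodic by hypothesis.

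The inductive step reduces to an extension lemma: if $M_{k-1}$ is ergodic and we append the single coordinate $c=s_{v_k}$, whose transition is governed by one of two $2$-state chains $Q_0,Q_1$ selected by the parent value $f(\sigma)=\sigma_{\delta_{v_k}^1}$, with each $Q_v$ ergodic, then the joint chain $M_k=(M_{k-1},c)$ is ergodic. For irreducibility I would steer the ambient chain and the new coordinate \emph{simultaneously}: since each $Q_v$ is irreducible, its off-diagonal entries are strictly positive, so $c$ may \emph{flip} at every step with positive probability no matter what the parent value is. Letting $c$ flip on every step moves it from $c$ to $c\oplus(L\bmod 2)$ over an $L$-step window, so to go from $(\sigma,c)$ to $(\sigma',c')$ I would choose a positive-probability $M_{k-1}$-path $\sigma\to\sigma'$ of length $L$ whose parity satisfies $c\oplus(L\bmod 2)=c'$. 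Such an $L$ exists because an irreducible aperiodic finite chain is primitive, hence $P^{L}(\sigma\to\sigma')>0$ for all sufficiently large $L$ and in particular for both parities; the resulting joint path then has positive probability.

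The main obstacle is precisely this coupling in time: one cannot in general ``freeze'' the already-placed coordinates while positioning the new one, because an ergodic $2$-state chain need not admit a self-loop at the value one wishes to hold. The flip-plus-parity argument above is what circumvents the need to freeze anything, turning the only genuine constraint (a parity constraint on $c$) into a free choice of path length made available by aperiodicity of $M_{k-1}$.

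For aperiodicity I would argue separately, and independently of the induction, that $M_k$ has at least one self-loop state, built top-down: the root chain is ergodic hence self-loops at some value; given a value at which the parent self-loops, each child's conditional chain $Q_v$ is ergodic and therefore self-loops at some value (a $2$-state ergodic chain cannot be the pure swap, so some diagonal entry is positive); iterating down $S_k$ yields a state $s^{\ast}$ with $P(s^{\ast}\to s^{\ast})>0$. Since an irreducible finite chain possessing even a single self-loop state is aperiodic, $M_k$ is aperiodic, and together with the irreducibility established above this gives ergodicity of $M_k$, completing the inductive step. Carrying the induction to $k=n$ shows that the full chain $M_n$ is ergodic, so its unique stationary distribution $\pi$ exists.
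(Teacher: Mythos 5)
Your proof is correct, and it takes a genuinely different route from the paper's. The shared part is only the skeleton: both arguments add nodes one at a time in a parent-first order and use the fact that each ancestor-closed prefix of coordinates evolves as an autonomous Markov chain (the paper phrases this as reducing to the line case and inducting on the prefix chains $P_{1:i}$). The inductive steps are entirely different. The paper's step is linear-algebraic: it writes $P_{1:i}$ in block form, conjugates by $T=\bigl[\begin{smallmatrix} I & 0 \\ I & I \end{smallmatrix}\bigr]$ to obtain a block upper-triangular matrix whose diagonal blocks are $P_{1:i-1}$ and a matrix whose rows have constant sign and sum to $\alpha_i-\beta_i$ or $\gamma_i-\omega_i$, and concludes by induction that $1$ is a simple eigenvalue of $P_{1:i}$ with every other eigenvalue of modulus strictly less than one. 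Your step is probabilistic: the flip-plus-parity path construction for irreducibility and the top-down self-loop state for aperiodicity. The difference is not cosmetic, because the two proofs read the word ``ergodic'' differently. The paper explicitly translates its hypothesis into the spectral condition $|\alpha_j-\beta_j|<1$, $|\gamma_j-\omega_j|<1$, which is weaker than irreducibility plus aperiodicity (a two-state chain with an absorbing state satisfies it), and correspondingly its conclusion is only the spectral one: a unique stationary distribution to which the chain converges from any initial state. You read ``ergodic'' in the standard textbook sense: your irreducibility step genuinely needs both off-diagonal entries of $Q_0,Q_1$ to be positive, and your self-loop step needs a positive diagonal entry; in exchange you obtain the stronger conclusion that the joint chain is irreducible and aperiodic, something the paper's proof never establishes and which is in fact false under its weaker reading of the hypothesis. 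So each argument is self-consistent under its own interpretation, yours being the standard one and the paper's being the one it actually uses downstream (existence and uniqueness of $\pi$). A final point in your favor: the paper asserts that ``it suffices to prove the line case,'' which for ergodicity of the full joint chain (as opposed to single-node marginals) really requires exactly the parent-first induction over the whole tree that you set up explicitly via ancestor-closed prefixes, so your treatment of that reduction is the more careful one.
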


\textbf{Average Reward.} Given a local policy $\zeta = (\zeta_1,\ldots,\zeta_n)\in\Gamma$, the stationary distribution of the states is determined by Proposition~\ref{prop:ergodic}, which we denote as $\pi(\zeta)\in\R^{2^n}$, where highlight the dependence of the stationary distribution on the policy $\zeta$. We also define $\pi_i(\zeta)$ to be the marginalized stationary distribution for state $s_i$ under policy $\zeta$. Hence, the average reward $R(\zeta)$ under policy $\zeta$ (for any initial state $s(0)$) is

{\small\begin{align}
R(\zeta)& = \lim_{T\rightarrow\infty} \frac{1}{T} \underset{a(t) = \zeta(s(t))}{\mathbb{E}}  \bigg[ \sum_{t=0}^T r(s(t)) | s(0)\bigg] \nonumber \\
&= \underset{s \sim \pi(\zeta)}{\mathbb{E}} r(s)  = \sum_{i=1}^n \underset{s_i \sim \pi_i(\zeta)}{\mathbb{E}} r_i(s_i) = \sum_{i=1}^n r_i^T \pi_i(\zeta). \label{eq:average_total_reward}
\end{align}}We will also define $$R_i(\zeta) = \underset{s_i \sim \pi_i(\zeta)}{\mathbb{E}} r_i(s_i) =  r_i^T \pi_i(\zeta)$$ to be the average reward at node $i$ under policy $\zeta$. 

\textbf{Reward Maximization.} We consider the problem of finding a local policy $\zeta=(\zeta_1,\ldots,\zeta_n)\in\Gamma$ such that the average reward is maximized. 
\begin{align}
     \max_{\zeta\in \Gamma} R(\zeta) \label{eq:opt}
\end{align}

We define tuple $\mathfrak{M}=(n,\mathcal{G},\{P_i,r_i\}_{i=1}^n)$ as a problem instance, where $n$ is the number of agents, $\mathcal{G}$ is the directed tree graph, $P_i$ is the local transition probability, and  $r_i$ is the local reward. 

Since $\Gamma = \Gamma_1\times\Gamma_2\times\cdots\times\Gamma_n$, we have $|\Gamma| = \prod_{i=1}^n|\Gamma_i| = 4^n$, indicating the search space is exponential in $n$. Despite this, in the next two chapters, we exploit the structures in the problem as exhibited in the factored transition probability \eqref{eq:P_factorization} and the additive reward \eqref{eq:r_decomposable} to develop a framework that finds near optimal solutions to \eqref{eq:opt} in time polynomial in $n$ under some assumptions on the problem instance.


\textbf{Notation Conventions. } 
$s_C$ for a tuple $C\subset \mathcal{N}$ means the state tuple $(s_i)_{i\in C}$. In particular, we will use $s_{i:j}$ to denote the state tuple $(s_i,s_{i+1},\ldots, s_j)$; we use $s_{\Delta_i^k}$ for the state tuple on the path from $i$ to $i$'s $k$-hop parent, $s_{\tilde\Delta_i^k}$ for the state tuple on the path from $i$'s $1$-hop parent to $k$-hop parent. We use the same convention for action $a$, policy $\zeta$ and set of policies $\Gamma$. For stationary distribution, we use $\pi_{1:n}$, or simply $\pi$ to denote the stationary distribution for the tuple of all states $(s_1,\ldots,s_n)$, we use $\pi_{C}$ where $C\subset\mathcal{N}$ to denote the marginalized stationary distribution for $(s_i)_{i\in C}$. In particular, $\pi_i$ is the marginalized stationary distribution for node $i$; $\pi_{i:j}$ is the marginalized stationary distribution for nodes $(i,i+1,\ldots,j)$. 

\section{Locality-based Local Policy Search}\label{sec:llps}

 Given problem instance $\mathfrak{M}=(n,\mathcal{G},\{P_i,r_i\}_{i=1}^n)$, we propose a Locality-based Local Policy Search (LLPS) method to approximately find a local policy $\zeta\in\Gamma$ that solves \eqref{eq:opt} approximately. 

\textbf{Overview.} Recall that the objective function $R$ is a summation of the $R_i$'s (cf. \eqref{eq:average_total_reward}). For each $i$, the average reward $R_i(\zeta_1,\zeta_2,\ldots,\zeta_n)$ is a function of the policies of all the nodes and even optimizing $R_i$ is difficult due to the exponentially large search space. In this section, we will show that despite the dependence on the local policies of all nodes, $R_i(\cdot)$ has special structure that makes optimization tractable. Firstly in Section~\ref{subsec:def_approximated_reward}, we define an approximated reward function $\hat{R}^k$ by replacing $R_i$ by an approximated local reward function $\hat{R}_i^k$ that only depends on the local policy of $i$ and $i$'s upto $k$-hop parents, reducing the search space to $O(4^k)$, much more efficient for small $k\ll n$. Secondly in Section~\ref{subsec:approximation_error}, we bound the gap between the true reward function and the approximated reward function. Lastly in Section~\ref{subsec:optimizing}, we optimize the approximated reward function. 

\subsection{Definition of Approximated Reward} \label{subsec:def_approximated_reward}
For each $i$, we consider a truncated MDP by looking at a MDP consisting of $i$ and $i$'s upto $k$-hop parents, while truncating the influence from nodes beyond the $k$-hop parent. In details, the truncated MDP consists of the path from node $i$ to $i$'s $k$-hop parent. The set of nodes on the path is $\Delta_i^k = (i,\delta_i^1,\ldots,\delta_i^k)$. See Figure \ref{fig:truncation} for an illustration. The local transition probabilities of node $(i, \delta_i^1,\delta_i^2,\ldots,\delta_i^{k-1})$ will be the same as the original model; however for the $k$-hop parent $\delta_i^k$, we will simply draw $s_{\delta_i^k}(t)$ uniformly from $\{0,1\}$ for all $t$, independent from everything else. More formally, the transition probability for state tuple $s_{\Delta_i^k}$ under action $a_{\Delta_i^k}$ is given by
\begin{align*}
&P(s_{\Delta_i^k}(t+1)|s_{\Delta_i^k}(t),a_{\Delta_i^k}(t)) \\
&= P_i(s_i(t+1)|s_i(t),s_{\delta_i^1}(t),a_i(t)) \\
&\quad \times P_{\delta_i^1}(s_{\delta_i^1}(t+1)| s_{\delta_i^1}(t), s_{\delta_i^2}(t), a_{\delta_i^1}(t) ) \\
&\quad \times \cdots\times  P_{\delta_i^{k-1}}(s_{\delta_i^{k-1}}(t+1)| s_{\delta_i^{k-1}}(t), s_{\delta_i^{k}}(t), a_{\delta_i^{k-1}}(t) ) \\
&\quad \times Uniform(s_{\delta_i^{k}}(t+1))
\end{align*}
which factorizes in the same way as the original model, except that the last factor $Uniform(s_{\delta_i^{k}}(t+1))$ is the uniform probability mass function on $\{0,1\}$, showing $s_{\delta_i^{k}}(t+1)$ is drawn from the uniform distribution independent of everything else. 
By forcing $s_{\delta_i^k}(t)$ to be drawn independently from a uniform distribution, we are effectively truncating the influence from all upper-stream nodes, and focus on a MDP of $k+1$ nodes. We will refer to the truncated model as $\hat{\mathfrak{M}}_i^k$.


For the truncated MDP $\hat{\mathfrak{M}}_i^k$, given local policy $(\zeta_i,\zeta_{\delta^1_i},\ldots,\zeta_{\delta^k_i}) = \zeta_{\Delta_i^{k}}$, the stationary distribution for state tuple $s_{\Delta_i^k}$ is determined, and we define the marginalized stationary distribution at node $i$ as $\hat{\pi}_i^k(\zeta_{\Delta^{k}_i}) $, which is a function of the policies of nodes in $\Delta_i^k$. Further, given $\hat{\pi}_i^k(\zeta_{\Delta^{k}_i}) $, we define the the average reward at $i$ in the truncated model as the following, $$\hat{R}_i^k(\zeta_{\Delta^{k}_i}) = \underset{s_i\sim \hat{\pi}_i^k( \zeta_{\Delta^{k}_i})}{\mathbb{E}} r_i(s_i)=r_i^T \hat{\pi}_i^k(\zeta_{\Delta^{k}_i}) $$

The \emph{approximated reward function} $\hat{R}^k(\zeta_1,\ldots,\zeta_n)$ is defined to be the summation of the $\hat{R}_i^k(\zeta_{\Delta^{k}_i})$. 
\begin{align}
    \hat{R}^k(\zeta_1,\ldots,\zeta_n) = \sum_{i=1}^n \hat{R}_i^k(\zeta_{\Delta^{k}_i})   \label{eq:approx_reward}
\end{align}
\begin{figure}
    \centering
    \includegraphics[width=0.8\columnwidth]{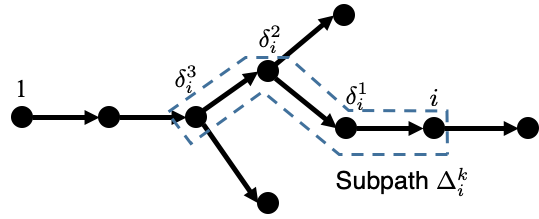}
    \caption{Illustration of truncated model at node $i$ for $k=3$. }
    \label{fig:truncation}
\end{figure}

\subsection{Approximation Error } \label{subsec:approximation_error}
The idea behind the above truncated model and approximated reward is that, intuitively, the policy at a node far away from $i$ should not have that much effect on $i$. Therefore, by looking at the truncated model instead of the full model, we should get a good approximation of $R_i$. In the following, the above intuition will be formally stated as ``fast decaying property'' in Definition~\ref{def:fast_decaying}, and we show in Lemma~\ref{lem:bounded_approx_error} that under the fast decaying property, the approximation error will decay exponentially in $k$. In this section, we will assume the fast decaying property holds. We will study under what conditions the fast decaying property holds in Section~\ref{sec:fast_decaying}. 

\begin{definition}\label{def:fast_decaying}
A Multi-Agent Markov Model $\mathfrak{M}$ has $(C,\rho)$-fast decaying property if for any $i$ and any local policy $\zeta=(\zeta_1,\ldots,\zeta_n)\in\Gamma$, 
$$ \Vert \pi_i(\zeta) - \hat\pi_i^k(\zeta_{\Delta^{k}_i})\Vert_1 \leq C\rho^k$$
\end{definition}
The fast decaying property directly leads to the following bound on the approximation error.
\begin{lemma}\label{lem:bounded_approx_error}
If the Multi-Agent Markov Model $\mathfrak{M}$ has $(C,\rho)$-fast decaying property, then we have for all $\zeta=(\zeta_1,\ldots,\zeta_n)$, 
$$\frac{1}{n}|R(\zeta) - \hat{R}^k(\zeta)|\leq  C\bar{r} \rho^k $$
\end{lemma}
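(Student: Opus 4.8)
The plan is to exploit the fact that both the true reward $R$ and the approximated reward $\hat R^k$ decompose additively over the $n$ nodes, so that the global approximation error splits into a sum of per-node errors, each of which is controlled directly by the fast decaying property of Definition~\ref{def:fast_decaying}.

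First I would subtract the two node-wise decompositions, namely $R(\zeta)=\sum_{i=1}^n r_i^T\pi_i(\zeta)$ from \eqref{eq:average_total_reward} and $\hat R^k(\zeta)=\sum_{i=1}^n r_i^T\hat\pi_i^k(\zeta_{\Delta_i^k})$ from \eqref{eq:approx_reward}, to obtain
$$R(\zeta) - \hat R^k(\zeta) = \sum_{i=1}^n r_i^T\big(\pi_i(\zeta) - \hat\pi_i^k(\zeta_{\Delta_i^k})\big).$$
Applying the triangle inequality then moves the absolute value inside the sum, reducing the problem to bounding each individual term $\big|r_i^T(\pi_i(\zeta) - \hat\pi_i^k(\zeta_{\Delta_i^k}))\big|$.

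Next, for each node $i$ I would bound this inner product by H\"older's inequality, viewing $r_i$ as a vector in $\R^2$: for any $v$ one has $|r_i^T v| \le \Vert r_i\Vert_\infty \Vert v\Vert_1 \le \bar r \Vert v\Vert_1$, where the second step uses the assumed uniform bound $\Vert r_i\Vert_\infty\le\bar r$. Taking $v = \pi_i(\zeta) - \hat\pi_i^k(\zeta_{\Delta_i^k})$ and invoking the $(C,\rho)$-fast decaying property gives $\big|r_i^T(\pi_i(\zeta) - \hat\pi_i^k(\zeta_{\Delta_i^k}))\big| \le \bar r\, C\rho^k$ for every $i$. Summing over the $n$ nodes yields $|R(\zeta)-\hat R^k(\zeta)| \le n C\bar r\rho^k$, and dividing by $n$ gives the claimed bound.

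The argument is essentially a direct assembly of three ingredients — additive decomposition, H\"older, and the decay hypothesis — so there is no substantial obstacle. The only point requiring care is the H\"older step: one must read the reward bound $\bar r$ as an $\ell_\infty$ bound on the reward vector $r_i$ so that it pairs correctly with the $\ell_1$ distance supplied by the fast decaying property; matching the two norms in this way is precisely what makes the per-node bound $\bar r\,C\rho^k$ come out clean. Since the fast decaying property in Definition~\ref{def:fast_decaying} holds uniformly over all $i$ and all policies $\zeta\in\Gamma$, the resulting bound likewise holds for every $\zeta\in\Gamma$, exactly as stated.
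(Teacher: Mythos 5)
Your proof is correct and follows essentially the same route as the paper's: decompose the error node-by-node, bound each term $|r_i^T(\pi_i(\zeta)-\hat\pi_i^k(\zeta_{\Delta_i^k}))|$ by pairing the $\ell_\infty$ bound $\Vert r_i\Vert_\infty\le\bar r$ with the $\ell_1$ bound from the fast decaying property, then sum and divide by $n$. No gaps; this matches the paper's argument step for step.
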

\begin{proof} Fixing $\zeta=(\zeta_1,\ldots,\zeta_n)$, then by the definition of $\hat{R}_i^k(\zeta_{\Delta^k_i})$, we have
\begin{align*}
    |\hat{R}_i^k(\zeta_{\Delta_i^k} ) - R_i(\zeta)| &= |r_i^T (\pi_i(\zeta) - \hat{\pi}_i^k(\zeta_{\Delta_i^k} ))|\\
    &\leq \Vert r_i\Vert_\infty \Vert \pi_i(\zeta) - \hat{\pi}_i^k(\zeta_{\Delta_i^k})\Vert_1 \leq \bar{r} C\rho^k
\end{align*}
Therefore, 
\begin{align*}
    \frac{1}{n}|R(\zeta) - \hat{R}^k(\zeta)| 
    &\leq \frac{1}{n} \sum_{i=1}^n | \hat{R}_i^k(\zeta_{\Delta_i^k}) - R_i(\zeta) | \leq C\bar{r} \rho^k
\end{align*}
\end{proof}

\subsection{Optimizing Approximated Reward Function}\label{subsec:optimizing}
The approximated reward function \eqref{eq:approx_reward} is a summation of individual $\hat{R}_i^k(\cdot)$, each of which only depend on policies at $k+1$ nodes. Maximization of such functions is equivalent to the MAP (Maximum a Posteriori) problem in graphical models with tree structures \cite{wainwright2008graphical}, and can be efficiently solved by many (mutually-equivalent) methods, like belief propagation, max-product, elimination method, or dynamic programming, cf. \cite[Sec. 2.5]{wainwright2008graphical} for a review. Here we provide a dynamic programming based algorithm for maximizing the approximated reward function.

The algorithm LLPS is provided in Algorithm~\ref{algorithm:optimize}. In the algorithm, BFS means Breadth-First Search (BFS) Ordering \cite{bfs} of the nodes, in which the root node comes first and parents must precede children, and leaf nodes come at last; reverse BFS means the reverse of a BFS ordering, where children always come before parents. 


\begin{algorithm}\caption{LLPS: Locality-based Local Policy Search}\label{algorithm:optimize}
	\begin{algorithmic}[1]
	    \For{$i$ traverses the tree in reverse BFS ordering}
	        \State Collect $V_j(\zeta_{\tilde\Delta_j^k})$ for all children $j\in c_i$.
	        \For{$\zeta_{\tilde\Delta^k_i} \in \Gamma_{\tilde\Delta^k_i}$}
	             \State $V_i(\tilde\Delta^k_i) \gets $ \par \hskip\algorithmicindent  $\underset{\zeta_i\in \Gamma_i }{\max} \hat{R}_i^k(\zeta_i,\zeta_{\tilde\Delta^k_i}) + \underset{j\in c_i}{\sum} V_j(\zeta_i,\zeta_{\tilde\Delta^{k-1}_i}) $
	        \EndFor
	    \EndFor

	 \State $\zeta_1^* \gets \underset{\zeta_1\in\Gamma_1}{\arg\max} \hat{R}_1^k(\zeta_1) + \underset{j\in c_1}{\sum} V_j(\zeta_1 )  $
	 \For{$i$ traverses the tree except the root in BFS ordering }
	    \State $\zeta_i^*\gets \underset{\zeta_i\in \Gamma_i }{\arg\max} \hat{R}_i^k(\zeta_i,\zeta^*_{\tilde\Delta^k_i}) + \underset{j\in c_i}{\sum} V_j(\zeta_i,\zeta^*_{\tilde\Delta^{k-1}_i})  $
	 \EndFor
	 \State \textbf{Output }$\zeta^*=(\zeta_1^*,\ldots,\zeta_n^*)$.
	 \end{algorithmic}
\end{algorithm}
\begin{figure}
    \centering
    \includegraphics[width=0.8\columnwidth]{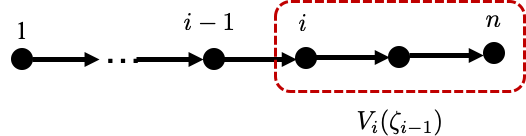}
    \caption{The case when the graph is a line.}
    \label{fig:linecase}
\end{figure}
\textit{Illustration on a line.} For ease of presentation and explanation, we provide the special case of the algorithm for a line graph with $k=1$, where the nodes are labelled in the natural ordering with the root node being $1$, and the parent of $i$ is $i-1$, as illustrated in Figure~\ref{fig:linecase}. In this case, the approximated reward function can be written as,
\begin{align}
\hat{R}^1(\zeta) = \hat{R}_1^1(\zeta_1) + \hat{R}_2^1(\zeta_1,\zeta_2) + \ldots \hat{R}_n^1(\zeta_{n-1},\zeta_n) \label{eq:approx_reward_line}
\end{align}
and we provide the algorithm below, in two steps.

\noindent\textit{Step 1.} For $i = n$ to $2$, construct the following table $V_i(\cdot)$. For every $\zeta_{i-1} \in \Gamma_{i-1}$, set $V_i(\zeta_{i-1}) = \underset{\zeta_i\in \Gamma_i }{\max} \hat{R}_i^1(\zeta_{i-1},\zeta_i) + V_{i+1}(\zeta_i) $, where $V_{i+1}$ is the table constructed for node $i+1$ and $V_{n+1}(\cdot)$ is treated as $0$.

\noindent\textit{Step 2.} Compute $\zeta_1^*=   \underset{\zeta_i\in \Gamma_i }{\arg\max} \hat{R}_1^1(\zeta_1) + V_{2}(\zeta_1)$. Then, for $i=2$ to $n$, compute $\zeta_i^* = \underset{\zeta_i\in \Gamma_i }{\arg\max} \hat{R}_i^k(\zeta_{i-1}^*,\zeta_i) +  V_{i+1}(\zeta_i)$. 
	   

The algorithm is similar to the backtracking method in dynamic programming \cite{bertsekas2007dpbook}, except here we do induction on a spatial quantity $i$ instead of time. The key to understanding the algorithm is $V_i(\zeta_{i-1})$. In fact, $V_i(\zeta_{i-1})$ is the optimal reward value of the path from $i$ to the leaf node $n$, if the policy at $i-1$ is $\zeta_{i-1}$. More formally, $V_i$ satisfies the following Lemma.
\begin{lemma}\label{lem:line_backward}
In LLPS for $k=1$ on a line graph, for $i\geq 2$,
\begin{align*}
V_i(\zeta_{i-1}) &= \max_{\zeta_{i},\ldots,\zeta_n} \hat{R}_i^1(\zeta_{i-1},\zeta_i) + \ldots+ \hat{R}_n^1(\zeta_{n-1},\zeta_n)  
\end{align*}
\end{lemma}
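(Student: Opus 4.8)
The plan is to prove this by backward induction on the node index $i$, running from $i = n$ down to $i = 2$, since $V_i$ is defined recursively in terms of $V_{i+1}$ with the convention $V_{n+1}(\cdot) = 0$. The statement asserts that the quantity $V_i(\zeta_{i-1})$ computed by Step~1 of the algorithm coincides with the optimal total reward accumulated along the tail of the line from node $i$ to the leaf $n$, given that node $i-1$ commits to policy $\zeta_{i-1}$.

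For the base case $i = n$, I would simply unfold the recursion: by definition $V_n(\zeta_{n-1}) = \max_{\zeta_n \in \Gamma_n} \hat{R}_n^1(\zeta_{n-1}, \zeta_n) + V_{n+1}(\zeta_n)$, and since $V_{n+1} \equiv 0$ this equals $\max_{\zeta_n} \hat{R}_n^1(\zeta_{n-1}, \zeta_n)$, which is exactly the right-hand side of the claimed formula specialized to $i = n$.

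For the inductive step, assume the claim holds at $i+1$. Substituting the inductive hypothesis for $V_{i+1}(\zeta_i)$ into the recursion $V_i(\zeta_{i-1}) = \max_{\zeta_i \in \Gamma_i} \hat{R}_i^1(\zeta_{i-1},\zeta_i) + V_{i+1}(\zeta_i)$ produces a maximization over $\zeta_i$ of the sum of $\hat{R}_i^1(\zeta_{i-1}, \zeta_i)$ and an inner maximization over $\zeta_{i+1}, \ldots, \zeta_n$ of the tail rewards $\hat{R}_{i+1}^1 + \cdots + \hat{R}_n^1$. The crucial structural fact, which is precisely the separability of the approximated reward (cf.~\eqref{eq:approx_reward_line}), is that $\hat{R}_i^1(\zeta_{i-1}, \zeta_i)$ does not depend on $\zeta_{i+1}, \ldots, \zeta_n$. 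Hence I can pull this term inside the inner maximization and then merge the two nested maxima into a single joint maximization over $(\zeta_i, \zeta_{i+1}, \ldots, \zeta_n)$, which is exactly the asserted expression for $V_i(\zeta_{i-1})$.

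The only subtlety — and the step I would state carefully rather than treat as obvious — is this collapse of the nested maximizations into one joint maximum, i.e. $\max_{\zeta_i}[c(\zeta_i) + \max_{\mathrm{rest}} f] = \max_{\zeta_i,\,\mathrm{rest}}[c(\zeta_i) + f]$. It is valid precisely because the term $\hat{R}_i^1(\zeta_{i-1}, \zeta_i)$ introduced at stage $i$ is constant with respect to the inner optimization variables $\zeta_{i+1}, \ldots, \zeta_n$. This is the standard principle of optimality underlying dynamic programming, and here it hinges entirely on the fact that in the $k=1$ line case each summand $\hat{R}_j^1$ couples only the adjacent pair $(\zeta_{j-1}, \zeta_j)$, so the tail objective factors cleanly across the cut between node $i-1$ and the subpath $i, \ldots, n$. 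I expect no further obstacle; the argument is routine once this interchange is justified.
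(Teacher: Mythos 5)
Your proposal is correct and takes essentially the same route as the paper: the paper's proof is just the one-line remark that the claim holds for $i=n$ by definition of $V_n$ and for $i<n$ ``by a simple induction argument,'' which is exactly the backward induction you carry out. Your explicit justification of merging the nested maxima --- valid because $\hat{R}_i^1(\zeta_{i-1},\zeta_i)$ is constant in $\zeta_{i+1},\ldots,\zeta_n$ --- is precisely the detail the paper leaves implicit.
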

\begin{proof} 
The lemma is true for $i=n$ by the definition of $V_{n}(\cdot)$, and it is true for $i<n$ by a simple induction argument. 
\end{proof}
With this lemma, we can easily show that
\begin{align*}
    \zeta_1^* &= \underset{\zeta_1}{\arg\max} \underset{\zeta_2,\ldots,\zeta_n}{\max}  \hat{R}_1^1(\zeta_1) + \hat{R}_2^1(\zeta_1,\zeta_2) + \ldots \hat{R}_n^1(\zeta_{n-1},\zeta_n) \\
    &= \underset{\zeta_1}{\arg\max} \underset{\zeta_2,\ldots,\zeta_n}{\max} \hat R^1(\zeta_1,\ldots,\zeta_n)
\end{align*}
and 
\begin{align*}
   (\zeta_1^*,\ldots, \zeta_i^*) = \underset{(\zeta_1,\ldots,\zeta_i)}{\arg\max} \underset{\zeta_{i+1},\ldots,\zeta_n}{\max} \hat{R}^1(\zeta_1,\ldots,\zeta_i,\zeta_{i+1},\ldots,\zeta_n)
\end{align*}
showing $(\zeta_1^*,\ldots, \zeta_n^*)$ is indeed a maximizer of $\hat{R}^1$. 

The above arguments also extends to the general case in Algorithm~\ref{algorithm:optimize}, except that the traversing order is (reverse) Breadth First Search, the children of node $i$ may not be $i+1$, and each $V_i$ may depend on the states on $i$'s $1$-hop to $k$-hop parents $\tilde\Delta_i^k$, not just $1$-hop parent. As a result of these reasoning, the algorithm LLPS finds a maximizer of $\hat{R}^k$ for the general tree-graph case. Now we discuss the computational complexity of implementing LLPS.

\vspace{4pt}
 \noindent\textbf{Complexity of evaluating $\hat{R}_i^k$.} By definition, $\hat{R}_i^k(\zeta_{\Delta^{k}_i})=r_i^T \hat{\pi}_i^k(\zeta_{\Delta^{k}_i}) $. Evaluation of $\hat{R}_i^k(\zeta_{\Delta^{k}_i})$ requires computing the stationary distribution of the truncated model, whose state space size is $2^{k+1}$, and then marginalize it to $\hat{\pi}_i^k(\zeta_{\Delta^{k}_i})$, and finally do a simple inner product. The bulk of the computation is calculating the stationary distribution, which takes at most cubic time in state-space size. Therefore, the evaluation of $\hat{R}_i^k$ takes time $O(2^{3k})$.

\vspace{4pt}
\noindent\textbf{Time complexity of LLPS.} For each $\zeta_{\tilde\Delta_i^k}$, the calculation of $V_i(\zeta_{\tilde\Delta_i^k}) $ takes time $O(2^{3k}+d)$, where $d$ is the maximum degree of the graph. As a result, the implementation of the algorithm takes time $O(n  4^k (2^{3k}+d))$, which is linear in $n$ for fixed $k$ and $d$.  

\vspace{4pt}
\noindent\textbf{Space complexity of LLPS.} For each $i$, a table $V_i(\cdot)$ of size $O(4^k)$ needs to be maintained.  Therefore the space complexity is $O(n 4^k)$.

Wrapping up the above results, as well as the approximation gap in Lemma~\ref{lem:bounded_approx_error}, we have the following straightforward proposition.
\begin{proposition}\label{prop:main}
LLPS finds a maximizer $\zeta^*=(\zeta_1^*,\ldots,\zeta_n^*)$ of $\hat{R}^k(\zeta_1,\ldots,\zeta_n)$ within time $O(n  4^k (2^{3k}+d))$. If the problem instance has $(C,\rho)$-fast decaying property, then $\zeta^*$ is a $C\rho^k\bar{r}$ approximate optimal solution in the sense that,
$$\frac{1}{n}\big[\max_{\zeta\in\Gamma} R(\zeta) - R(\zeta^*) \big]\leq C\bar{r}\rho^k.$$
\end{proposition}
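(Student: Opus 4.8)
The plan is to separate the two assertions of the proposition: first the purely algorithmic claim that LLPS returns a global maximizer of $\hat{R}^k$ together with its stated running time, and then the approximation guarantee that this maximizer is near-optimal for the true reward $R$. The two parts draw on disjoint ingredients already in place — the dynamic-programming correctness argument and the per-node complexity counts for the first, and Lemma~\ref{lem:bounded_approx_error} for the second.

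For the algorithmic claim I would lift the line-graph invariant of Lemma~\ref{lem:line_backward} to the tree by induction over the reverse-BFS ordering. The invariant to prove is that, for every node $i$ and every fixed ancestor policy $\zeta_{\tilde\Delta_i^k}$, the table entry $V_i(\zeta_{\tilde\Delta_i^k})$ equals the maximum over all policies assigned to $i$ and its descendants of the sum of $\hat{R}_j^k$ over the subtree rooted at $i$. The base case is a leaf, where the child sum is empty. The inductive step rests on one structural observation: since $\hat{R}_m^k$ for a descendant $m$ of $i$ depends only on $\zeta_{\Delta_m^k}$, its dependence on ancestors strictly above $i$ reaches at most $\delta_i^{k-1}$, i.e.\ lies within $\tilde\Delta_i^{k-1}$. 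Hence, once $\zeta_i$ and the shared upper ancestors $\zeta_{\tilde\Delta_i^{k-1}}$ are fixed, the rewards in distinct children's subtrees are determined by disjoint policy variables, so the subtree maximum separates as $\hat{R}_i^k(\zeta_i,\zeta_{\tilde\Delta_i^k})+\sum_{j\in c_i}V_j(\zeta_i,\zeta_{\tilde\Delta_i^{k-1}})$, exactly the update in Algorithm~\ref{algorithm:optimize} (noting $\tilde\Delta_j^k=(i,\tilde\Delta_i^{k-1})$). Applying the invariant at the root and tracing the $\arg\max$ choices forward in BFS order yields $\zeta^*$ attaining $\max_\zeta\hat{R}^k(\zeta)$. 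The running time then assembles from the established costs: $O(2^{3k})$ per evaluation of $\hat{R}_i^k$, $O(4^k)$ ancestor-policy entries per node, and $O(d)$ to combine children, giving $O(n\,4^k(2^{3k}+d))$.

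For the approximation guarantee I would use the standard surrogate-optimization sandwich. Let $\zeta^{o}\in\arg\max_{\zeta\in\Gamma}R(\zeta)$ and decompose
$$
R(\zeta^{o})-R(\zeta^*)=\big[R(\zeta^{o})-\hat{R}^k(\zeta^{o})\big]+\big[\hat{R}^k(\zeta^{o})-\hat{R}^k(\zeta^*)\big]+\big[\hat{R}^k(\zeta^*)-R(\zeta^*)\big].
$$
The middle bracket is $\leq 0$ because $\zeta^*$ maximizes $\hat{R}^k$, while the first and third brackets are each at most $nC\bar{r}\rho^k$ by Lemma~\ref{lem:bounded_approx_error}. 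Dividing by $n$ controls the normalized optimality gap.

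The step I expect to be the real obstacle is matching the exact constant in the statement. The sandwich above naturally yields $\frac{1}{n}\big[\max_\zeta R(\zeta)-R(\zeta^*)\big]\leq 2C\bar{r}\rho^k$, twice the claimed bound, since the truncation error of Lemma~\ref{lem:bounded_approx_error} is paid once at $\zeta^{o}$ and once at $\zeta^*$. Sharpening the factor $2$ to $1$ would require something stronger than pointwise closeness — for instance a one-sided bound $\hat{R}^k(\zeta)\geq R(\zeta)$ or $\hat{R}^k(\zeta)\leq R(\zeta)$ holding uniformly in $\zeta$, either of which collapses the decomposition to a single error term. The uniform-truncation construction does not obviously supply such a one-sided inequality, so absent a sharper estimate I would either state the bound as $2C\bar{r}\rho^k$ or fold the factor $2$ into the constant $C$.
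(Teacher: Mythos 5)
Your proposal is correct and follows the same route the paper intends: the paper treats this proposition as a direct wrap-up, proving DP correctness only for the line case (Lemma~\ref{lem:line_backward}) and asserting that ``the above arguments also extend to the general case.'' Your reverse-BFS induction supplies precisely what that assertion leaves implicit --- in particular the structural observation that $\hat{R}_m^k$ for a descendant $m$ of $i$ depends on ancestors of $i$ only up to $\delta_i^{k-1}$, so that distinct children's subtrees decouple once $\zeta_i$ and $\zeta_{\tilde\Delta_i^{k-1}}$ are fixed, which is exactly why the update $V_i(\zeta_{\tilde\Delta_i^k}) = \max_{\zeta_i}\hat{R}_i^k(\zeta_i,\zeta_{\tilde\Delta_i^k})+\sum_{j\in c_i}V_j(\zeta_i,\zeta_{\tilde\Delta_i^{k-1}})$ is valid. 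The complexity accounting matches the paper's verbatim.

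The obstacle you flag is genuine, and your diagnosis is right: the paper provides no argument for the constant beyond Lemma~\ref{lem:bounded_approx_error}, and that lemma gives only two-sided pointwise closeness $\frac{1}{n}|R(\zeta)-\hat{R}^k(\zeta)|\leq C\bar{r}\rho^k$, from which the sandwich decomposition necessarily yields $2C\bar{r}\rho^k$. No factor-$1$ bound can be extracted from pointwise closeness alone: if $\hat{R}^k$ undershoots $R$ by the full error at the true optimizer and overshoots it at a policy whose true reward is nearly $2nC\bar{r}\rho^k$ worse, the surrogate maximizer attains a gap approaching twice the pointwise error; and the uniform-randomization truncation supplies no one-sided relation between $\hat{R}^k$ and $R$ that would collapse the decomposition. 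So the paper's stated bound should read $2C\bar{r}\rho^k$, or equivalently the factor $2$ should be absorbed into $C$ --- a harmless slip, since only the exponential decay in $k$ matters for the result, but your version of the statement is the one that actually follows from the paper's own lemmas.
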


\noindent\textbf{Tradeoff between computation and accuracy.} It can be seen in Proposition~\ref{prop:main} that computational complexity increases exponentially in $k$, whereas the optimality gap of the solution decays exponentially $k$. This suggests a tradeoff between computational complexity and desired accuracy level of the solution. In Section~\ref{sec:case_study}, we will discuss the tradeoff in more detail through numerical analysis.

\section{Fast Decaying Property}\label{sec:fast_decaying}
In this section, we provide conditions under which the $(C,\rho)$-fast decaying holds. To state our theoretic results, we parameterize the transition probabilities as follows, where the ordering of the rows and columns is such that the first row (column) corresponds to state $0$, and the second row (column) corresponds to state $1$.

{\small\begin{align*}
P_i(s_i(t+1)  | s_i(t) , s_{\delta_i^1}(t) =0,a_i(t)=0) &= \left[\begin{array}{ll}
e_i & 1-e_i\\
f_i & 1 - f_i
\end{array}\right]\\
P_i(s_i(t+1)  | s_i(t) , s_{\delta_i^1}(t) =1,a_i(t)=0) &= \left[\begin{array}{ll}
g_i & 1-g_i\\
h_i & 1 - h_i
\end{array}\right] \\
P_i(s_i(t+1)  | s_i(t) , s_{\delta_i^1}(t) =0,a_i(t)=1) &= \left[\begin{array}{ll}
e_i' & 1-e_i'\\
f_i' & 1 - f_i'
\end{array}\right]\\
P_i(s_i(t+1)  | s_i(t) , s_{\delta_i^1}(t) =1,a_i(t)=1) &= \left[\begin{array}{ll}
g_i' & 1-g_i'\\
h_i' & 1 - h_i'
\end{array}\right] 
\end{align*}}For the root node $i=1$ who has no parent, we will simply set $e_1=g_1$, $f_1 = h_1$, $e'_1 = g'_1$ and $f_1' = h_1'$. Our results can be stated as follows,
\begin{lemma} \label{lem:fast_decaying}
Given problem instance $\mathfrak{M}$. If for all $i$, the parameters satisfy,
{\small\begin{align*}
    e_i - f_i &=g_i - h_i \in(-1,1),\quad  e_i - f_i'&=g_i - h_i' \in(-1,1)\\
    e_i' - f_i &=g_i' - h_i \in(-1,1),\quad  e_i' - f_i' &=g_i' - h_i' \in(-1,1)
\end{align*}}and further if 
{\small\begin{align*}
    \max\Bigg\{&|\frac{e_i - g_i}{1- (e_i-f_i)}|, |\frac{e_i - g_i}{1-(e_i-f_i')}|, \\
&|\frac{e_i' - g_i'}{1-(e_i'-f_i)}|,|\frac{e_i' - g_i'}{1-(e_i'-f_i')}| \Bigg\}\leq \rho\in(0,1)
\end{align*}}Then, the problem instance is $(2,\rho)$-fast decaying. 
\end{lemma}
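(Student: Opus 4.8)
The plan is to reduce everything to the scalar marginal $q_j := P(s_j = 1)$ at each node and to show that, under the stated eigenvalue equalities, these marginals obey a \emph{closed} affine recursion down any ancestral path whose per-step coefficient is bounded by $\rho$.

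First I would fix a node $i$ and a policy $\zeta$ and restrict attention to the ancestral path $\delta_i^k = m_0,\, m_1=\delta_i^{k-1},\ldots,\, m_k = i$. (The case where $i$ has no $k$-hop parent is exact: the truncated model then coincides with the autonomous root-to-$i$ chain, so $\pi_i = \hat\pi_i^k$.) Once the policy is fixed, each node $j$ with parent $p$ transitions through a $2\times2$ matrix $T^{(x)}$ indexed by the parent state $x\in\{0,1\}$, whose first-column entries $(\alpha,\beta)$ (for $x=0$) and $(\gamma,\delta)$ (for $x=1$) are selected by $\zeta_j$. The four hypotheses $e_j-f_j=g_j-h_j$, etc., say exactly that $T^{(0)}$ and $T^{(1)}$ share the same second eigenvalue $\lambda_j=\alpha-\beta=\gamma-\delta$ for each of the four policies, which in turn forces $\alpha-\gamma=\beta-\delta=:\theta_j$. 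I would then verify the identity $T^{(x)}_{y,1} = (1-\alpha) + y\,\lambda_j + x\,\theta_j$, which is \emph{affine in $(x,y)$ with no cross term}; this is precisely what the equal-eigenvalue condition buys. Consequently $P(s_j(t+1)=1)=(1-\alpha)+\lambda_j\,q_j(t)+\theta_j\,q_p(t)$ depends on the parent only through its marginal $q_p(t)$, never through the joint law of $(s_p,s_j)$.

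Next I would pass to stationarity. By Proposition~\ref{prop:ergodic} the marginals are time-invariant, so the relation becomes the fixed point $q_j=\big[(1-\alpha)+\theta_j q_p\big]/(1-\lambda_j)$, well defined since $\lambda_j\in(-1,1)$. Because this uses only the parent's marginal, it holds verbatim in both the true model and the truncated model $\hat{\mathfrak{M}}_i^k$; in the latter the top node $m_0=\delta_i^k$ is uniform, i.e. $q_{m_0}=\tfrac12$. The per-step coefficient $b_j=\theta_j/(1-\lambda_j)$ takes, over the four policies, exactly the four ratios in the $\max$, so $|b_j|\le\rho$. Composing the affine relation along $m_0\to m_1\to\cdots\to m_k=i$ yields $q_i = A + B\,q_{m_0}$ with $|B|=\prod_{l=1}^{k}|b_{m_l}|\le\rho^k$, where $A,B$ depend only on the dynamics of $m_1,\ldots,m_k$, which are identical in the two models.

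Finally I would subtract the two instances: sharing $A,B$ but differing in the boundary value ($q_{m_0}^{\mathrm{true}}\in[0,1]$ versus $\tfrac12$), I get $|q_i-\hat q_i^k|=|B|\,|q_{m_0}^{\mathrm{true}}-\tfrac12|\le\tfrac12\rho^k$, and since the $\ell_1$ distance between two laws on $\{0,1\}$ is twice the gap in $P(\cdot=1)$, this gives $\Vert\pi_i(\zeta)-\hat\pi_i^k(\zeta_{\Delta_i^k})\Vert_1\le\rho^k\le 2\rho^k$, i.e. $(2,\rho)$-fast decaying. The crux—and the reason the hypotheses are phrased as eigenvalue \emph{equalities}—is the second step: the vanishing cross term makes the marginal recursion \emph{close} (marginals, not joints), without which the child marginal would couple to the full parent--child joint law and the clean scalar contraction by $\rho$ would break down. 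A minor point to dispatch carefully is the boundary case where $i$'s depth is below $k$, together with the root convention $e_1=g_1,\ f_1=h_1$ (which makes $\theta_1=0$), ensuring consistency of the recursion at the top of the tree.
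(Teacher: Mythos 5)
Your proof is correct, and while it arrives at exactly the same scalar recursion as the paper, it gets there by a genuinely different and more elementary route. The paper reduces to the root-to-$i$ path (a self-complete Markov chain) and then proves the key recursion $b_i = \frac{1-\alpha_i}{1-\mu_i} + \frac{\alpha_i-\gamma_i}{1-\mu_i}\,b_{i-1}$ (Lemma~\ref{lem:a_i_sensitivity}) by linear algebra on the joint $2^i\times 2^i$ transition matrix $P_{1:i}$: a similarity transform $T$ block-triangularizes it --- the equal-difference hypothesis $\alpha_i-\beta_i=\gamma_i-\omega_i=\mu_i$ is precisely what collapses the lower-right block to the scalar multiple $\mu_i P_{1:i-1}$ --- after which the stationary distribution is constructed explicitly and the identity $(I-\mu_i P_{1:i-1})^{-1}\mathbf{1}=\frac{1}{1-\mu_i}\mathbf{1}$ extracts the recursion. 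You obtain the identical recursion directly from the pointwise identity $P\big(s_j(t+1)=1 \mid s_j(t)=y,\, s_{\delta_j^1}(t)=x\big)=(1-\alpha_j)+\lambda_j y+\theta_j x$, affine in $(x,y)$ with no cross term (this is exactly what the four equality hypotheses encode, since they force $\alpha_j-\gamma_j=\beta_j-\omega_j$ under every one of the four deterministic local policies), so taking expectations closes the recursion on marginals without ever touching the joint law of parent and child; time-invariance at stationarity then yields the fixed point $q_j=[(1-\alpha_j)+\theta_j q_p]/(1-\lambda_j)$. From there the two arguments coincide: the truncated and true chains share parameters on $m_1,\ldots,m_k$, so the discrepancy is a product of per-step ratios, each matching one of the four quantities in the max and hence bounded by $\rho$. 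Your version even sharpens the constant: bounding $|q_{m_0}^{\mathrm{true}}-\tfrac12|\le\tfrac12$ rather than the paper's $|b_{i-k}-\hat b_{i-k}|\le 1$ gives $\Vert \pi_i(\zeta)-\hat\pi_i^k(\zeta_{\Delta_i^k})\Vert_1\le\rho^k$, i.e.\ $(1,\rho)$-fast decaying, which implies the claimed $(2,\rho)$ statement. What the paper's heavier matrix route buys in exchange is the explicit block structure of the full stationary distribution $\pi_{1:i}=[\pi_{1:i-1}-\pi_{1:i}^+,\ \pi_{1:i}^+]$, a computation that is reused (in variant form) to establish ergodicity in Proposition~\ref{prop:ergodic}, whereas your argument presupposes that ergodicity; your handling of the boundary cases (depth below $k$, root convention forcing $\theta_1=0$) is consistent with the paper's conventions.
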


The reason we impose the assumption on the transition probability parameters is that it greatly simplifies the proof, which we will provide in Section~\ref{subsec:proof_fast_decaying}. However, we conjecture that the fast decaying property holds for much general range of parameters. We will conduct numerical simulations in Section~\ref{subsec:numerical_fast_decaying} that certifies fast decaying property for randomly generated transition probability parameters.

\subsection{Proof of Lemma~\ref{lem:fast_decaying}}\label{subsec:proof_fast_decaying}

We notice that under the tree dependence structure, fixing any local policy $\zeta=(\zeta_1,\ldots,\zeta_n)$, the states along the path from the root to node $i$ form a self-complete Markov chain, and the marginalized stationary distribution $\pi_i(\zeta)$ at node $i$ only depends on the Markov Chain on the path. Further, the truncated model at node $i$ is a subpath of the path from root to $i$, where the nodes beyond $i$'th $k$-hop parent are ``truncated''. Therefore, to study the fast decaying property it suffices to study the case where the dependence graph is a line rooted at $1$ and with leaf node $i$. We label the nodes with the nature ordering on the line.

In the line structure, the parent of node $j$ is $j-1$ (for $j\geq 2$). 
We fix a local policy $\zeta=(\zeta_1,\ldots,\zeta_i)$ and from now on, drop the dependence on policy. Let the local transition probabilities induced by the policy be, 
{\small
\begin{subeqnarray}\label{eq:p_j_parameterization}
P_j(s_j(t+1)  | s_j(t) , s_{j-1}(t) =0) &=& \left[\begin{array}{ll}
\alpha_j & 1-\alpha_j\\
\beta_j & 1 - \beta_j
\end{array}\right]\\
P_j(s_j(t+1)  | s_j(t) , s_{j-1}(t) =1) &=& \left[\begin{array}{ll}
\gamma_j & 1-\gamma_j\\
\omega_j & 1 - \omega_j
\end{array}\right] \\
P_1(s_1(t+1)  | s_1(t)  ) &= &\left[\begin{array}{ll}
\alpha_1 & 1-\alpha_1\\
\beta_1 & 1 - \beta_1
\end{array}\right] 
\end{subeqnarray}}where we also introduce $\gamma_1 = \alpha_1$ and $\omega_1 = \beta_1$ for notational consistency. The value of $(\alpha_j,\beta_j,\gamma_j,\omega_j)$ depends on the local policy $\zeta_j$ and the local transition parameters $(e_j,f_j,g_j,h_j,e_j',f_j',g_j',h_j')$. For example, if $\zeta_j$ is such that action $a_j=0$ regardless of $s_j$, then $(\alpha_j,\beta_j,\gamma_j,\omega_j) = (e_j,f_j,g_j,h_j)$; if the $\zeta_j $ is such that $a_j=0$ when $s_j=0$, $a_j=1$ when $s_j=1$, then $(\alpha_j,\beta_j,\gamma_j,\omega_j) = (e_j,f_j',g_j,h_j')$. Under the assumptions in Lemma~\ref{lem:fast_decaying}, it is easy to check that regardless of the policy $\gamma_j$, there exists some $\mu_j\in(-1,1)$ s.t. $(\alpha_j,\beta_j,\gamma_j,\omega_j)$ satisfies,
\begin{align}
    \alpha_j - \beta_j = \gamma_j - \omega_j &=\mu_j  \\
    \Big|\frac{\alpha_j - \gamma_j}{1-\mu_j} \Big|&\leq \rho \label{eq:proof_fast_decaying:rho}
\end{align}We denote the stationary distribution of the model as $\pi_{1:i}$, and the marginalized distribution at node $j$ as $\pi_j$ for $1\leq j\leq i$. 

The statement of Lemma~\ref{lem:fast_decaying} bounds marginalized stationary distribution $\pi_i$ and that of the truncated model. Recall that, the truncated model is on state tuple $s_{i-k:i}$, i.e. on the path from $i$ to $i$'s $k$-hop parent, node $i-k$. Let the local transition probabilities be $\hat{P}_j(\cdot)$ for $i-k\leq j\leq i$, and let them be parameterized by $(\hat\alpha_j,\hat\beta_j,\hat\gamma_j,\hat\omega_j)_{i-k\leq j\leq i}$ in the same fashion as \eqref{eq:p_j_parameterization}. By the way the truncated model is constructed, we have $\hat{\alpha}_j = \alpha_j, \hat{\beta}_j = \beta_j, \hat\gamma_j = \gamma_j, \hat\omega_j=\omega_j$ for $i-k+1\leq j\leq i$; and for $j=i-k$,
$$\hat{\alpha}_{i-k} = \hat{\beta}_{i-k} = \hat{\gamma}_{i-k}= \hat{\omega}_{i-k} = \frac{1}{2}.$$
Setting $\hat{\mu}_j = \mu_j$ for $i-k< j \leq i$ and $\hat{\mu}_{i-k} = 0$, we have for $i-k\leq j\leq i$,
\begin{align}
        \hat\alpha_j - \hat\beta_j = \hat\gamma_j - \hat\omega_j &=\hat\mu_j\in(-1,1)  
\end{align}
Let the stationary distribution of truncated model be $\hat{\pi}_{i-k:i}$, and let the marginalized stationary distribution at node $j$ be $\hat{\pi}_j$ for $i-k\leq j\leq i$. 

We now provide a recursive formula on the marginalized stationary distribution that works for both the original model and the truncated model.


\begin{lemma}\label{lem:a_i_sensitivity}
For any Markov chain on state tuple $s_{1:i}=(s_1,\ldots,s_i)$ that factorizes in in the following way,
{\small\begin{align*}
    &P(s_{1:i}(t+1)|s_{1:i}(t)) \\
    &= P_1(s_1(t+1)|s_1(t)) \prod_{j=2}^i P_j(s_j(t+1)|s_j(t),s_{j-1}(t))
\end{align*}}where $P_j$ is parameterized by,
{\small
$$P_j(s_j(t+1) = \cdot | s_j(t) = \cdot, s_{j-1}(t) =0) = \left[\begin{array}{ll}
\alpha_j & 1-\alpha_j\\
\beta_j & 1 - \beta_j
\end{array}\right]$$
$$P_j(s_j(t+1) = \cdot | s_j(t) = \cdot, s_{j-1}(t) =1) = \left[\begin{array}{ll}
\gamma_j & 1-\gamma_j\\
\omega_j & 1 - \omega_j
\end{array}\right]$$
$$P_1(s_1(t+1) = \cdot | s_1(t) = \cdot ) = \left[\begin{array}{ll}
\alpha_1 & 1-\alpha_1\\
\beta_1 & 1 - \beta_1
\end{array}\right]$$}Assume $\alpha_ j- \beta_j = \gamma_j - \omega_j = \mu_j\in(-1,1)$. For $1\leq j\leq i$, let $b_j$ be the probability of $s_j=1$ under the stationary distribution. Then, for any $1\leq k\leq i-1$, 
\begin{align*}
 b_i = b_{i-k}\prod_{j=i-k+1}^i \frac{\alpha_j - \gamma_j}{1 - \mu_{j}} + \sum_{j=i-k+1}^i \frac{1-\alpha_j}{1-\mu_j} \prod_{\ell=j+1}^i \frac{\alpha_\ell - \gamma_\ell}{1-\mu_\ell} 
 \end{align*}
\end{lemma}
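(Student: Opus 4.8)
The plan is to reduce the $k$-step formula to a single affine recursion relating $b_j$ to $b_{j-1}$, and then unroll that recursion by induction on $k$. The formula being claimed looks exactly like the closed-form solution of a linear difference equation $b_j = B_j + A_j b_{j-1}$ with $A_j = (\alpha_j-\gamma_j)/(1-\mu_j)$ and $B_j = (1-\alpha_j)/(1-\mu_j)$, so the whole task is to (i) establish this one-step recursion from stationarity and (ii) iterate it.

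First I would write the stationarity condition for the marginal at node $j$. Because $s_j(t+1)$ is generated from $(s_j(t),s_{j-1}(t))$ through $P_j$, setting the law of $s_j(t+1)$ equal to that of $s_j(t)$ gives
$$b_j = (1-\alpha_j)\,q_{00} + (1-\beta_j)\,q_{10} + (1-\gamma_j)\,q_{01} + (1-\omega_j)\,q_{11},$$
where $q_{xy} = P(s_j=x,\,s_{j-1}=y)$ is the joint stationary law of the pair $(s_j,s_{j-1})$. A priori these four weights depend on the pairwise correlation between $s_j$ and $s_{j-1}$, which we have no direct handle on. The crucial step — and the only place the hypothesis $\alpha_j-\beta_j=\gamma_j-\omega_j=\mu_j$ enters — is to substitute $1-\beta_j=(1-\alpha_j)+\mu_j$ and $1-\omega_j=(1-\gamma_j)+\mu_j$. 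After regrouping, every term that needs the joint law collapses into marginals, since $q_{00}+q_{10}=1-b_{j-1}$, $q_{01}+q_{11}=b_{j-1}$, and $q_{10}+q_{11}=b_j$. This yields
$$b_j = (1-\alpha_j)(1-b_{j-1}) + (1-\gamma_j)\,b_{j-1} + \mu_j\,b_j,$$
and solving for $b_j$ (legitimately dividing by $1-\mu_j>0$ since $\mu_j\in(-1,1)$) gives the clean affine recursion
$$b_j = \frac{1-\alpha_j}{1-\mu_j} + \frac{\alpha_j-\gamma_j}{1-\mu_j}\,b_{j-1}.$$

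Finally I would unroll this from $j=i-k+1$ up to $j=i$. The base case $k=1$ is the recursion itself, and the inductive step multiplies the running product of the $A_j$ factors and appends the new $B_j$ term weighted by $\prod_{\ell=j+1}^i A_\ell$ (an empty product, equal to $1$, when $j=i$); this produces exactly
$$b_i = b_{i-k}\prod_{j=i-k+1}^i \frac{\alpha_j-\gamma_j}{1-\mu_j} + \sum_{j=i-k+1}^i \frac{1-\alpha_j}{1-\mu_j}\prod_{\ell=j+1}^i \frac{\alpha_\ell-\gamma_\ell}{1-\mu_\ell},$$
the claimed identity. I expect the main obstacle to be the derivation of the one-step recursion rather than the unrolling: the non-obvious point is that the correlations between $s_j$ and $s_{j-1}$ never need to be evaluated, because the structural hypothesis $\alpha_j-\beta_j=\gamma_j-\omega_j$ forces the joint-law terms to cancel and reduces everything to the two marginals $b_j$ and $b_{j-1}$. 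Once that cancellation is seen, the remaining argument is the routine solution of a linear difference equation.
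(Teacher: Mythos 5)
Your proof is correct, and it reaches the paper's key identity by a genuinely different and more elementary route. Both arguments hinge on the same one-step affine recursion $b_j=\frac{1-\alpha_j}{1-\mu_j}+\frac{\alpha_j-\gamma_j}{1-\mu_j}\,b_{j-1}$, and the final unrolling by induction is identical; the difference is how that recursion is obtained. The paper works with the full $2^i\times 2^i$ transition matrix $P_{1:i}$: it applies the similarity transform $T=\bigl[\begin{smallmatrix}I&0\\ I&I\end{smallmatrix}\bigr]$ to put $P_{1:i}$ in block upper-triangular form with diagonal blocks $P_{1:i-1}$ and $\mu_i P_{1:i-1}$ (that is where the hypothesis $\alpha_i-\beta_i=\gamma_i-\omega_i=\mu_i$ enters in the paper), explicitly constructs the stationary distribution as $\pi_{1:i}=[\pi_{1:i-1}-\pi_{1:i}^+,\ \pi_{1:i}^+]$ with $\pi_{1:i}^+$ given by a resolvent formula, and then extracts $b_i=\pi_{1:i}^+\mathbf{1}$ using $(I-\mu_i P_{1:i-1})^{-1}\mathbf{1}=\frac{1}{1-\mu_i}\mathbf{1}$. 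You instead impose stationarity only of the scalar marginal at node $j$ and observe that the same structural hypothesis collapses the four joint-law weights $q_{xy}$ of $(s_j,s_{j-1})$ into the marginals $1-b_{j-1}$, $b_{j-1}$, and $b_j$ --- the same cancellation the paper performs on matrix blocks, executed directly on scalars --- after which dividing by $1-\mu_j>0$ is legitimate since $\mu_j\in(-1,1)$. Your route is shorter and more transparent, avoiding the explicit construction of $\pi_{1:i}$ and the eigenvector verification, at the cost of presupposing that a stationary distribution exists; this is harmless here because the lemma statement (and the paper's standing assumption via Proposition~\ref{prop:ergodic}) already grants existence. What the paper's heavier machinery buys in exchange is that it actually exhibits the stationary distribution recursively and reuses the very same similarity transform to establish ergodicity in the proof of Proposition~\ref{prop:ergodic}.
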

Recall that $\pi_j$ is the stationary distribution of $s_j$ in the original model. We use the notation in Lemma~\ref{lem:a_i_sensitivity} to write it as $\pi_j = [1-b_j,b_j]$. Similarly, we write $\hat{\pi}_j = [1 - \hat{b}_j,\hat{b}_j]$ for the truncated model. Then Lemma~\ref{lem:a_i_sensitivity} shows that for the original model,
\begin{align*}
 b_i = b_{i-k}\prod_{j=i-k+1}^i \frac{\alpha_j - \gamma_j}{1 - \mu_{j}} + \sum_{j=i-k+1}^i \frac{1-\alpha_j}{1-\mu_j} \prod_{\ell=j+1}^i \frac{\alpha_\ell - \gamma_\ell}{1-\mu_\ell} 
 \end{align*}
and for the truncated model,
\begin{align*}
 \hat{b}_i = \hat{b}_{i-k}\prod_{j=i-k+1}^i \frac{\hat\alpha_j - \hat\gamma_j}{1 - \hat\mu_{j}} + \sum_{j=i-k+1}^i \frac{1-\hat\alpha_j}{1-\hat\mu_j} \prod_{\ell=j+1}^i \frac{ \hat\alpha_\ell-\hat\gamma_\ell}{1-\hat\mu_\ell} 
 \end{align*}
Since for $j\geq i-k+1$, $(\alpha_j,\beta_j,\gamma_j,\omega_j,\mu_j) = (\hat\alpha_j,\hat\beta_j,\hat\gamma_j,\hat\omega_j,\hat\mu_j)$, we have
\begin{align*}
    |b_i - \hat b_i| = |b_{i-k} - \hat b_{i-k}| \prod_{j=i-k+1}^i \Big|\frac{\alpha_j - \gamma_j}{1 - \mu_{j}}\Big|\leq \rho^k
\end{align*}
where we have used \eqref{eq:proof_fast_decaying:rho}. As a result,
$\Vert\pi_i - \hat\pi_i\Vert_1\leq 2\rho^k$, which concluces the proof. 

\subsection{An Empirical Study of Fast Decaying Property}\label{subsec:numerical_fast_decaying}
We note that Lemma~\ref{lem:fast_decaying} requires the parameters $(e_i,f_i,g_i,h_i,e_i',f_i',g_i',h_i')$ to satisfy certain equality constraint. However, we conjecture that the fast decaying property holds for much more general parameters. To this end, we do Monte-Carlo simulations to test our conjecture. We fix the graph to be a line consisting of $10$ nodes. For each run, we generate all parameters uniformly from $[0,1]$, and also select $\zeta_i$ uniformly random from one of the $4$ possible maps from $s_i\in\{0,1\}$ to $a_i\in\{0,1\}$. We then compute the marginalized stationary distribution at the leaf node $\pi_{10}$, as well the the marginal distribution of the truncated model $\hat\pi_{10}^k$, for $k=1,\ldots,8$, and plot the gap $\Vert \pi_{10} - \hat\pi_{10}^k\Vert_1$ as a function of $k$. We do $50$ runs with random parameters and policies. The results are given in Figure~\ref{fig:fast_decaying}.  The figure confirms that the error $\Vert \pi_{10} - \hat\pi_{10}^k\Vert_1$ decays exponentially fast in $k$. 

\begin{figure}[t]
		\centering
		\includegraphics[width=0.9\columnwidth]{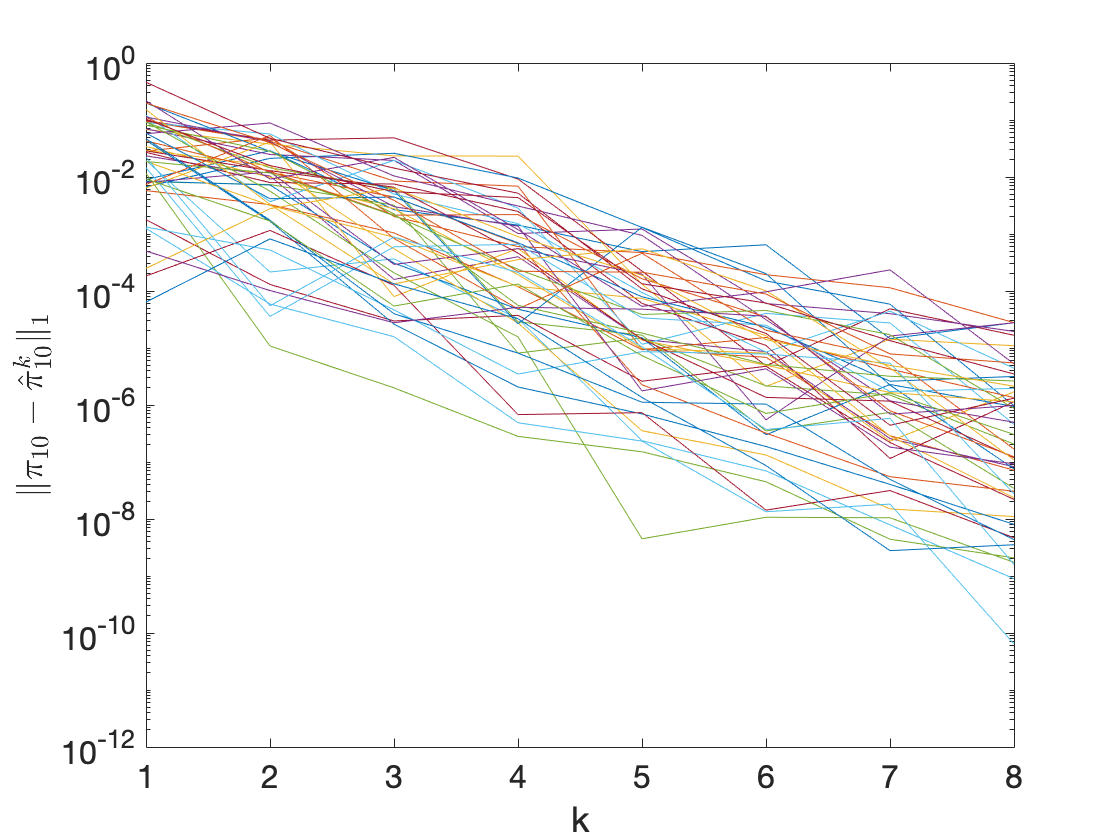} 

	\caption{Monte Carlo simulations showing $\Vert \pi_{10} - \hat\pi_{10}^k\Vert_1$ as a function of $k$. }\label{fig:fast_decaying}
\end{figure}
\section{Case Studies}\label{sec:case_study}
In this section, we conduct simulations on our algorithm. We let the dependence graph be the $9$-node graph in Fig.~\ref{fig:truncation}. We draw the local transition probability parameters $(e_i,f_i,g_i,h_i,e_i',f_i',g_i',h_i')$ independently from the uniform distribution on $[0,1]$, and we draw each component of the reward vector $r_i$ from the uniform distribution on $[0,1]$. For a model of this size, we are able to find the exact optimal solution through exhaustive search. We then run our algorithm LLPS for $k=1,2,\ldots,7$. In Table~\ref{table:results}, we provide the reward achieved by the optimal local policy (obtained through exhaustive search) and the reward achieved by the local policy output by LLPS for different $k$, as well as the optimality gap and the running time\footnote{All numerical simulations are done on MATLAB2018b using a MacBook Pro Early 2015 Model with 2.7GHz Intel Core i5.} of the respective algorithm.  As shown in Table~\ref{table:results}, our algorithm is several magnitudes faster than the exhaustive search, and increasing $k$ also causes our algorithm to be slower. Further, the reward achieves by our algorithm is very close to the optimal reward for $k=1,2$, and our algorithm achieves the exact optimal reward for $k\geq 3$. One possible explanation is that for large $k$, the gap between the approximate reward and the true reward, becomes smaller than the gap of the optimal reward and the reward of the second best policy. As a result, the optimizer of the approximate reward and the true reward function coincide.

\begin{table}

	\begin{center}
	\caption{Reward and running time results for exhaustive search and LLPS.}\label{table:results}

	\begin{tabular}{c|ccc}
\textbf{Algorithm}  & \textbf{Reward}  & \textbf{Optimality Gap} & \textbf{Running Time} \\
		\hline
		Exhaustive Search& 4.2578&0 &1804.9s\\
		\hline
		  LLPS $k=1$ & 4.2123 & 0.0456&0.6049s\\
		  LLPS $k=2$ & 4.2563 &0.0016 &0.2558s\\ 
		  LLPS $k=3$ & 4.2578 &0 & 1.6037s\\
		LLPS $k=4$ & 4.2578 &0 & 5.9123s\\
		  LLPS $k=5$ & 4.2578 & 0& 17.9991s\\
		  LLPS $k=6$ & 4.2578 &0 & 46.1841s\\
		  LLPS $k=7$ & 4.2578 &0 & 47.3225s
	\end{tabular}
\end{center}
\end{table}

\section{Conclusion}
We study a multi-agent MDP problem with an exponentially large state and action space, and we aim to find the optimal local policy when the MDP has local dependence structure. We propose a Locality-based Local Policy Search method and show it can efficiently find a near-optimal local policy when the MDP has a ``fast-decaying property''. We show the fast decaying property holds when the dependence structure is a one-directional tree and the transition probabilities satisfy certain assumptions. However, our Monte-Carlo simulations in~\ref{subsec:numerical_fast_decaying} show that the fast decaying property appears to hold under very general conditions. Future work includes gaining deeper understanding on the fast decaying property and show the fast decaying property holds under more general conditions on the parameters and for more general dependence graphs. Further, we are interested in utilizing the fast decaying property to design model-free learning algorithms with provable near-optimal guarantees.
\bibliographystyle{IEEEtran}
\bibliography{refs}

\appendix

\subsection{Proof of Lemma~\ref{lem:a_i_sensitivity}}\label{subsec:appendix_proof_sensitivity}
Firstly, we note that for any $1\leq\ell\leq i$, the states $(s_1,\ldots,s_\ell)$ form a self-complete Markov chain, with transition matrix $P_{1:\ell}$ given by $P_{1:\ell}(s_{1:\ell}(t+1)|s_{1:\ell}(t)) 
    =P_1(s_1(t+1)|s_1(t)) \prod_{j=2}^\ell P_j(s_j(t+1)|s_j(t),s_{j-1}(t))$
and we denote the stationary distribution as $\pi_{1:\ell}\in\R^{2^\ell}$.

For any $1\leq\ell\leq i$, the rows and columns of matrix $P_{1:\ell}\in\R^{2^\ell\times 2^\ell}$, as well as the entries of the stationary distribution $\pi_{1:\ell}$, are indexed by state tuple $(s_1,s_2,\ldots,s_\ell)\in\{0,1\}^\ell$. We use the following ordering that maps each state tuple $(s_1,s_2,\ldots,s_\ell)$ to a row (column) number in $\{1,\ldots,2^\ell\}$.
$$\{0,1\}^\ell\ni  (s_1,s_2,\ldots,s_\ell) \mapsto 1+\sum_{j=1}^\ell 2^{j-1} s_j \in \{1,\ldots,2^\ell\}$$
The above ordering means that, for the $2^\ell$ rows (columns), designate the upper (left) half to $s_\ell=0$ and the lower (right) half to $s_\ell=1$; within each of the halves, designate the upper (left) quarter to $s_{\ell-1}=0$ and the lower (right) quarter to $s_{\ell-1}=0$. Continue this procedure we can uniquely identify each row (column) with a state tuple $(s_1,\ldots,s_\ell)$.

We divide $P_{1:i-1}$ into the upper half and lower half as in the following.
$$P_{1:i-1} = \left[\begin{array}{ll}
P_{1:i-1}^-\\
P_{1:i-1}^+
\end{array} \right] $$ 
It is easy to check,
{\small$$P_{1:i} = \left[\begin{array}{ll}
\alpha_{i} P_{1:i-1}^- & (1-\alpha_{i})  P_{1:i-1}^- \\
\gamma_{i}  P_{1:i-1}^+ & (1-\gamma_{i})  P_{1:i-1}^+\\
\beta_{i}  P_{1:i-1}^- & (1-\beta_{i})  P_{1:i-1}^-\\
\omega_{i}  P_{1:i-1}^+ & (1-\omega_{i})  P_{1:i-1}^+
\end{array} \right].$$}We do the following similarity transform.
{\small
\begin{align*}
P_{1:i} &=	\overbrace{ \left[\begin{array}{cc}
I & 0\\
I & I
\end{array}\right] }^{:=T} {\left[ \begin{array}{cc}
		    \left[\begin{array}{ll}
			P_{1:i-1}^-\\
			P_{1:i-1}^+
		\end{array} \right] &
		 \left[\begin{array}{ll}
			(1-\alpha_{i}) P_{1:i-1}^-\\
			(1-\gamma_{i} ) P_{1:i-1}^+
		\end{array} \right] \\
		0  & 		  \left[\begin{array}{ll}
			(\alpha_{i} - \beta_{i}) P_{1:i-1}^-\\
			(\gamma_{i} -\omega_{i}) P_{1:i-1}^+
		\end{array} \right] 
	\end{array} \right] }\\
	&\quad \times  \left[\begin{array}{cc}
I & 0\\
-I & I
	\end{array}\right]\\
	&=	T \overbrace{\left[ \begin{array}{cc}
	P_{1:i-1} &
		\left[\begin{array}{ll}
		(1-\alpha_{i}) P_{1:i-1}^-\\
		(1-\gamma_{i} ) P_{1:i-1}^+
		\end{array} \right] \\
		0  & 		  \mu_{i} P_{1:i-1}
		\end{array} \right] }^{:=\tilde{P}_{1:i}}  T^{-1} = T \tilde{P}_{1:i} T^{-1}
	\end{align*}}As such, $P_{1:i}$ is similar to $\tilde{P}_{1:i}$. If row vector $\pi$ is an left eigenvector of $\tilde{P}_{1:i}$ with eigenvalue $\lambda$, then $\pi T^{-1} P_{1:i} = \pi T^{-1} T \tilde{P}_{1:i} T^{-1} = \lambda \pi T^{-1} $, i.e. $ \pi T^{-1} $ is an left eigenvector of $P_{1:i}$ with eigenvalue $\lambda$. With this relation, we can study the stationary distribution of $P_{1:i}$ through the eigenvector of $\tilde{P}_{1:i}$. Let $\pi_{1:i-1}$ be the stationary distribution of $P_{1:i-1}$. Let $$\pi_{1:i}^+ = \pi_{1:i-1}  \left[\begin{array}{ll}
	(1-\alpha_{i}) P_{1:i-1}^-\\
	(1-\gamma_{i} ) P_{1:i-1}^+
	\end{array} \right]  (I - \mu_{i}P_{1:i-1})^{-1}  \in \R^{2^{i-1}}$$
	Then, let $\tilde{\pi}_{1:i} = [{\pi}_{1:i-1}, \pi_{1:i}^+]\in\R^{2^i}$. We can verify
	{\small
	\begin{align*}
	\tilde{\pi}_{1:i} \tilde{P}_{1:i} &=  [{\pi}_{1:i-1},\pi_{1:i}^+] \left[ \begin{array}{cc}
	P_{1:i-1} &
		\left[\begin{array}{ll}
		(1-\alpha_{i}) P_{1:i-1}^-\\
		(1-\gamma_{i} ) P_{1:i-1}^+
		\end{array} \right] \\
		0  & 		  \mu_{i} P_{1:i-1}
		\end{array} \right]  \\
	&=[ \pi_{1:i-1}, \pi_{1:i}^+ (I - \mu_{i} P_{1:i-1}) + \pi_{1:i}^+ \mu_{i}P_{1:i-1} ]\\
	&= \tilde{\pi}_{1:i}^T
	\end{align*}}Therefore, $\tilde{\pi}_{1:i}$ is the left eigenvector of $\tilde{P}_{1:i}$ corresponding to eigenvalue $1$, and as a result $\pi_{1:i}= \tilde{\pi}_{1:i} T^{-1} = [ \pi_{1:i-1} - \pi_{1:i}^+,\pi_{1:i}^+ ]$ is the stationary distribution under $P_{1:i}$. 
	
	Notice that $b_i$, the probability of $s_i=1$ under the stationary distribution, is $b_i = \pi_{1:i}^+ \mathbf{1}$ (where $\mathbf{1}$ is the all one column vector of appropriate dimension), the summation of the right half of $\pi_{1:i}$ per the way that the states are ordered. Therefore,
	{\small
	\begin{align}
	b_{i}& = \pi_{1:i}^+ \mathbf{1} \nonumber \\
	&\stackrel{(a)}{=} \pi_{1:i-1}  \left[\begin{array}{ll}
	(1-\alpha_{i}) P_{1:i-1}^-\\
	(1-\gamma_{i} ) P_{1:i-1}^+
	\end{array} \right]  (I - \mu_{i}P_{1:i-1})^{-1} \mathbf{1} \nonumber\\
	&\stackrel{(b)}{=}\frac{1}{1-\mu_{i}} \pi_{1:i-1}  \left[\begin{array}{ll}
	(1-\alpha_{i}) P_{1:i-1}^-\\
	(1-\gamma_{i} ) P_{1:i-1}^+
	\end{array} \right]  \mathbf{1}\nonumber\\
	&\stackrel{(c)}{=} \frac{1}{1-\mu_{i}} \pi_{1:i-1}  \left[\begin{array}{ll}
	(1-\alpha_{i}) \mathbf{1}\\
	(1-\gamma_{i} ) \mathbf{1}
	\end{array} \right]  \nonumber\\
	&\stackrel{(d)}{=} \frac{1-\alpha_{i}}{1-\mu_{i}} (1-b_{i-1}) + \frac{1-\gamma_{i}}{1-\mu_{i}}  b_{i-1} \nonumber\\
	&=  \frac{1-\alpha_{i}}{1-\mu_{i}} + \frac{\alpha_{i}-\gamma_{i}}{1-\mu_{i}}  b_{i-1}\label{eq:proof_fast_decaying:b_recursive}
	\end{align}}where (a) is due to the definition of $\pi_{1:i}^+$, (c) is due to that each row of $P_{1:i-1}^-$ and $P_{1:i-1}^+$ sum up to $1$; (d) is due to the definition of $b_{i-1}$. For $(b)$, it is due to, since $\mu_i\in(-1,1)$,
	$$(I - \mu_{i}P_{1:i-1})^{-1} \mathbf{1} =\mathbf{1} + \sum_{\tau=1}^\infty \mu_i^\tau P_{1:i-1}^\tau \mathbf{1} = \frac{1}{1-\mu_i}\mathbf{1}. $$ 
	We can expand \eqref{eq:proof_fast_decaying:b_recursive} recursively and get the desired result.
		\begin{align*}
	    b_i = b_{i-k}\prod_{j=i-k+1}^i \frac{\alpha_j - \gamma_j}{1 - \mu_{j}} + \sum_{j=i-k+1}^i \frac{1-\alpha_j}{1-\mu_j} \prod_{\ell=j+1}^i \frac{\alpha_\ell - \gamma_\ell}{1-\mu_\ell} 
	\end{align*}
	
\ifthenelse{\boolean{isfv}}{\subsection{Proof of Proposition~\ref{prop:ergodic}}
It suffices to prove the line case, where the root node is $1$ and node $i$'s parent is node $i-1$. We fix the policy $\zeta = (\zeta_1,\ldots,\zeta_n)$ and from now on drop the dependenc on $\zeta$. The same as Appendix-\ref{subsec:appendix_proof_sensitivity}, we parameter the local transition (under policy $\zeta$) as
{\small
\begin{subeqnarray}\label{eq:p_j_parameterization}
P_j(s_j(t+1)  | s_j(t) , s_{j-1}(t) =0) &=& \left[\begin{array}{ll}
\alpha_j & 1-\alpha_j\\
\beta_j & 1 - \beta_j
\end{array}\right]\\
P_j(s_j(t+1)  | s_j(t) , s_{j-1}(t) =1) &=& \left[\begin{array}{ll}
\gamma_j & 1-\gamma_j\\
\omega_j & 1 - \omega_j
\end{array}\right] \\
P_1(s_1(t+1)  | s_1(t)  ) &= &\left[\begin{array}{ll}
\alpha_1 & 1-\alpha_1\\
\beta_1 & 1 - \beta_1
\end{array}\right] 
\end{subeqnarray}}where we also introduce $\gamma_1 = \alpha_1$ and $\omega_1 = \beta_1$ for notational consistency. The condition of Proposition~\ref{prop:ergodic} means that for all $j$, transition matrix 
$$ \left[\begin{array}{ll}
\alpha_j & 1-\alpha_j\\
\beta_j & 1 - \beta_j
\end{array}\right],\left[\begin{array}{ll}
\gamma_j & 1-\gamma_j\\
\omega_j & 1 - \omega_j
\end{array}\right]  $$
is ergodic, meaning $|\alpha_j - \beta_j|<1$, $|\gamma_j - \omega_j|<1$. 

Notice that for $1\leq i \leq n$, the state $(s_1,\ldots,s_i)$ form a self complete Markov chain, with transition matrix denoted as $P_{1:i}$. The rows and columns of matrix $P_{1:i}\in\R^{2^i\times 2^i}$, are indexed by state tuple $(s_1,s_2,\ldots,s_i)\in\{0,1\}^i$. We use the following ordering that maps each state tuple $(s_1,s_2,\ldots,s_i)$ to a row (column) number in $\{1,\ldots,2^i\}$.
$$\{0,1\}^i\ni  (s_1,s_2,\ldots,s_i) \mapsto 1+\sum_{j=1}^i 2^{j-1} s_j \in \{1,\ldots,2^i\}$$

We claim that for all $1\leq i\leq n$, the transition matrix $P_{1:i}$ is ergodic. We show this for induction. It is true for $i=1$. Assume it is true for $i-1$. We divide $P_{1:i-1}$ into the upper half and lower half as in the following.
$$P_{1:i-1} = \left[\begin{array}{ll}
P_{1:i-1}^-\\
P_{1:i-1}^+
\end{array} \right] $$ 
Per the ways the rows and columns are ordered, it is easy to check,
{\small$$P_{1:i} = \left[\begin{array}{ll}
\alpha_{i} P_{1:i-1}^- & (1-\alpha_{i})  P_{1:i-1}^- \\
\gamma_{i}  P_{1:i-1}^+ & (1-\gamma_{i})  P_{1:i-1}^+\\
\beta_{i}  P_{1:i-1}^- & (1-\beta_{i})  P_{1:i-1}^-\\
\omega_{i}  P_{1:i-1}^+ & (1-\omega_{i})  P_{1:i-1}^+
\end{array} \right].$$}We do the following similarity transform.
{\small
\begin{align*}
P_{1:i} &=	\overbrace{ \left[\begin{array}{cc}
I & 0\\
I & I
\end{array}\right] }^{:=T} {\left[ \begin{array}{cc}
		    \left[\begin{array}{ll}
			P_{1:i-1}^-\\
			P_{1:i-1}^+
		\end{array} \right] &
		 \left[\begin{array}{ll}
			(1-\alpha_{i}) P_{1:i-1}^-\\
			(1-\gamma_{i} ) P_{1:i-1}^+
		\end{array} \right] \\
		0  & 		  \left[\begin{array}{ll}
			(\alpha_{i} - \beta_{i}) P_{1:i-1}^-\\
			(\gamma_{i} -\omega_{i}) P_{1:i-1}^+
		\end{array} \right] 
	\end{array} \right] }\\
	&\quad \times  \left[\begin{array}{cc}
I & 0\\
-I & I
	\end{array}\right]\\
	&=	T \overbrace{\left[ \begin{array}{cc}
	P_{1:i-1} &
		\left[\begin{array}{ll}
		(1-\alpha_{i}) P_{1:i-1}^-\\
		(1-\gamma_{i} ) P_{1:i-1}^+
		\end{array} \right] \\
		0  & 		 \left[\begin{array}{ll}
			(\alpha_{i} - \beta_{i}) P_{1:i-1}^-\\
			(\gamma_{i} -\omega_{i}) P_{1:i-1}^+
		\end{array} \right]  
		\end{array} \right] }^{:=\tilde{P}_{1:i}}  T^{-1} = T \tilde{P}_{1:i} T^{-1}
\end{align*}}
Now $P_{1:i}$ share the spectrum with $\tilde{P}_{1:i}$. Matrix $\tilde{P}_{1:i}$ is upper block-triangular. The upper left diagonal block is $P_{1:i-1}$, which by the induction assumption has a unique eigenvalue $1$ and all other eigenvalues are strictly smaller than $1$ (in magnutude). For the lower right block, 
$$ \left[\begin{array}{ll}
			(\alpha_{i} - \beta_{i}) P_{1:i-1}^-\\
			(\gamma_{i} -\omega_{i}) P_{1:i-1}^+
		\end{array} \right]  $$
each of its row have the same sign and sum up to either $(\alpha_{i} - \beta_{i})$ or $(\gamma_{i} -\omega_{i})$, both of which are strictly less than $1$ in absolute value. Therefore, $\tilde{P}_{1:i}$ has a unique eigenvalue $1$, and all other eigenvalues are strictly less than $1$ in absolute value. As a result, $P_{1:i}$ is ergodic, and the induction is finished.

Therefore, $P_{1:n}$ is ergodic, concluding the proof.
}{}

\end{document}